\newtheorem{theorem}{Theorem}[section]
\newtheorem{definition}[theorem]{Definition}
\newtheorem{lemma}[theorem]{Lemma}
\newtheorem{remark}[theorem]{Remark}
\newtheorem{proposition}[theorem]{Proposition}
\newtheorem{example}[theorem]{Example}
\newcommand{\red}[1]{\textcolor{red}{#1}}
\numberwithin{equation}{section}
\numberwithin{figure}{section}
\begin{document}
\setlength\arraycolsep{2pt}
\date{\today}
       
\title{Direct inversion scheme of time-domain fluorescence diffuse optical tomography
by asymptotic analysis of peak time}
\author{Shuli Chen,\; Junyong Eom\thanks{Corresponding author},\; Gen Nakamura,\; Goro  Nishimura}

\maketitle
\begin{abstract}
This paper proposes a direct inversion scheme for fluorescence diffuse optical tomography (FDOT) to reconstruct the location of a point target using the measured peak time of the temporal response functions. A sphere is defined for the target, with its radius determined by the peak time, indicating that the target lies on the sphere. By constructing a tetrahedron with edges determined by the radii, we identify the location of the target as the vertex of the tetrahedron.
Asymptotically, we derive the relationship between the radius of the sphere and the peak time. Several numerical tests are implemented to demonstrate the accuracy and performance of the asymptotic relationship and the inversion scheme.
\end{abstract}

\medskip
{\bf Keywords.} FDOT, Peak time, Asymptotic analysis, Reconstruction algorithm.

\medskip
{\bf MSC(2010):} 35R30, 35K20.

\vspace{10pt}
\noindent Addresses:

\noindent S. Chen:
School of Mathematics, Southeast University, Nanjing 210096, P. R. China. \\
\noindent 
E-mail: {\tt sli\_chen@126.com} \;
ORCID ID: {\tt 0009-0000-6563-5312}\\

\noindent J. Eom:  
Research Institute for Electronic Science, Hokkaido University, Sapporo 001-0020, Japan.\\
\noindent 
E-mail: {\tt eom@es.hokudai.ac.jp} \;
ORCID ID: {\tt 0000-0002-2749-3322}\\

\noindent G. Nakamura:  Department of Mathematics, Hokkaido University, Sapporo 060-0810,
Research Institute for Electronic Science, Hokkaido University, Sapporo 001-0020, Japan.\\
\noindent 
E-mail: {\tt nakamuragenn@gmail.com} \; 
ORCID ID: {\tt 0000-0002-7911-8612}\\

\noindent G. Nishimura:  
Research Institute for Electronic Science, Hokkaido University, Sapporo 001-0020, Japan.\\
\noindent 
E-mail: {\tt gnishi@es.hokudai.ac.jp} \;
ORCID ID: {\tt 0000-0003-4330-2626}\\

\noindent {\bf Acknowledgement}:
The first author was supported by the National Natural Science Foundation of China (No. 12071072) and China Scholarship Council (No. 202106090240). The second author was supported by the JSPS KAKENHI (Grant Number 23K19002).
The third author was supported by the JSPS KAKENHI (Grant Number JP22K03366).
The last author was supported by the JSPS/MEXT KAKENHI (Grant Numbers JP23H04127 and JP24K03305).

\newpage
\section{Introduction}
Fluorescence diffuse optical tomography (FDOT) is a fluorescence imaging technique for fluorophores embedded in highly scattering media. In particular, this technique is essential in biological or medical applications for tissues \textit{in vivo} to visualize specific diseases and biological activities using fluorescent probes from measurements outside of the tissue \cite{Ammari2020, Mycek2004, Vasilis2002}. Since the problems caused by the highly scattering media do not allow visualizing fluorophores in deep tissue using standard imaging techniques such as fluorescence camera, requiring a special imaging technique, FDOT. 
In highly scattering media such as biological tissue, the light path is not straight anymore, and the repeating scattering makes the light propagation an energy dissipation process approximately described by a diffusion equation. 
The fluorescence image is blurred significantly, 
eventually, a reconstruction method based on the light propagation model is required to visualize and recover the quantitative information of three-dimensional fluorescence distribution from the measurements only on the boundary of the medium \cite{Jiang2011, Jiang2022}. 

In this paper, we focus on a time-domain method of FDOT, measuring temporal response functions.
The temporal response function is the light intensity detected at certain times after an instantaneous injection of light, like a delta function. After the injection, light spreads in medium, and finally arrives the detection point after many different paths at different arrival times, corresponding to the traveling times of different path lengths \cite{Hebden1991}. In addition, there is a time delay in the temporal function due to the staying time in the excited state of the fluorophore determined by the fluorescence lifetime. 
Therefore, the temporal response function is determined by the path length distribution and the fluorescence lifetime.
Then, we use the peak time, defined as the time when the response function becomes maximum, for the inputs of the reconstruction of targets because it is least affected by noise and artifacts due to environmental contamination and should be robustly determined. The peak time corresponds to the most likely length of the light path , determined by the distances between the source, the target, and the detector.

Next, we start to formulate our inverse problem. 
Let $\Omega:={\mathbb R}_+^3$ and denote its boundary by $\partial\Omega$. Excitation and emission processes, $u_e$ and $U_m$, respectively, can be modeled as the following coupled diffusion equations \cite{Liu2022}:
\begin{equation}\label{ue_sys}
\left\{
\begin{array}{ll}
\left(v^{-1} \partial_t-D \Delta+\mu_a\right) u_e=0, & (x, t) \in \Omega \times(0, \infty), \\ 
u_e=0, & (x, t) \in \bar{\Omega} \times\{0\}, \\
\partial_\nu u_e+\beta u_e=\delta\left(x-x_s\right) \delta(t), & (x, t) \in \partial \Omega \times(0, \infty)
\end{array}
\right.
\end{equation}
and
\begin{equation}\label{um_sys}
\left\{
\begin{array}{ll}
\left(v^{-1} \partial_t-D \Delta+\mu_a\right) U_m=
\mu (f_\ell \ast u_e), & (x, t) \in \Omega \times(0, \infty), \\ 
U_m=0, & (x, t) \in \bar{\Omega} \times\{0\}, \\
\partial_\nu U_m+\beta U_m=0, & (x, t) \in \partial \Omega \times(0, \infty).
\end{array}
\right.
\end{equation}
Here, $\partial_\nu:=\nu \cdot \nabla$ is the exterior normal derivative, $v$ is the speed of light in the medium, $D$ is the diffusion constant, $\mu_a$ is the absorption coefficient, and $\beta=b/D>0$ is a positive constant with $b\in[0,1]$, coming from the Fresnel's reflection at the boundary due to the refractive index mismatch at the boundary. Also, $\delta(\cdot)$ denotes the delta function, and $x_s\in\partial\Omega $ is the position of the point source where the excitation light is injected. Further, the source term, which corresponds to the fluorescence emission from the fluorophores, for $U_m$ on the right-hand side of \eqref{um_sys} is given as  
\begin{equation}\label{source term}
\begin{split}
\mu (f_\ell \ast u_e) (x,t) 
&:= \mu(x) \int_0^t \ell^{-1}e^{-\frac{t-s}{\ell}} u_e(x, s; x_s)\,{\rm d}s,
\end{split}
\end{equation}
where  $\mu(x)>0$ is the absorption coefficient of a fluorophore, and $f_\ell\ast u_e$ is the convolution of $u_e$ and the fluorescence decay function, $f_\ell(t):=\ell^{-1}e^{-t/\ell},\,t\ge0$, with the fluorescence lifetime $\ell>0$. 
Note that if $\ell\rightarrow 0$, it is easy to see that $\mu(f_\ell\ast u_e)(x,t)=\mu(x)u_e(x,t;x_s)$. Hence, we extend the definition of \eqref{source term} to the case of the zero fluorescence lifetime $(\ell=0)$ by defining its right-hand side by $\mu(x)u_e(x,t;x_s)$.
\medskip

Then, our inverse problem is formulated as follows.

\noindent
\textbf{Inverse Problem}: Let $\mu(x)= c\delta(x-x_c)$,
where $x_c$ is the location of an unknown point target, and unknown $c>0$ is the absorption strength of committed fluorescence by the target at $x_c$.
Then, reconstruct $x_c$ from the measured peak times given as
$$t_{peak}^{(n)}:=t_{peak}(x_s^{(n)},\,x_d^{(n)})=\underset{t>0}{\arg \max}\,U_m(x_d^{(n)},\,t;\,x_s^{(n)}),\,n=1,\,2,\,\cdots,\,N,$$
where  $\left\{\{x_s^{(n)},\,x_d^{(n)}\}\right\}_{n=1}^N$ are $N$ sets of source and detector pairs (S-D pairs) consisting of source points $\{x_s^{(n)}\}_{n=1}^N$ and detector points $\{x_d^{(n)}\}_{n=1}^N$ located on $\partial\Omega$. 

\medskip

Given S-D pair $(x_s, x_d)$ on the surface $\partial \Omega$ and an unknown target $x_c$ in $\Omega$, the peak time is determined by the distances between the source, the target, and the detector. We represent the distance by $\lambda>0$, where it satisfies
\begin{equation}\label{def_distance}
\lambda^2 = \frac{|x_d-x_c|^2+|x_s-x_c|^2}{2vD}.
\end{equation}
In Theorems \ref{thm_lzero} and \ref{thm_linfty}, we derive the asymptotic expansions for the approximate peak time with respect to the distance $\lambda \gg 1$, where the approximate peak time is defined as the approximation for the peak time.
By inverting the role of the peak time $t>0$ and $\lambda>0$, we obtain the asymptotic expansions for the distance $\lambda>0$ 
\begin{equation}\label{expanforlambda}
\lambda(t) = \left\{
\begin{array}{ll}
k^\frac{1}{2}t + \frac{7}{4}k^{-\frac{1}{2}} - 
\frac{\sqrt{k}}{\sqrt{k} + \beta \sqrt{vD}}k^{-\frac{1}{2}}
-\ell k^\frac{1}{2} + O\left(t^{-1}\right) &  \mbox{if} \quad \ell \ll 1, \vspace{5pt} \\ 
(k-\ell^{-1})^\frac{1}{2}t - \tilde \alpha_t t^\frac{1}{2} + \frac{\tilde \alpha_t^2}{2} (k-\ell^{-1})^{-\frac{1}{2}} + O\left(t^{-\frac{1}{2}}\right)& 
\mbox{if} \quad \ell > \pi^{\frac{1}{2}} (k-\ell^{-1})^{-\frac{1}{2}} t^\frac{1}{2},
\end{array}
\right.
\end{equation}
as $t \gg 1$, where
\begin{equation}\label{def_alphatilde}
k := v \mu_a >0, \quad \tilde \alpha_t:=
 \left(-\log\left[ \pi^{\frac{1}{2}} \ell^{-1} (k-\ell^{-1})^{-\frac{1}{2}} t^\frac{1}{2} \right] \right)^\frac{1}{2}.
\end{equation}
The asymptotic expansion \eqref{expanforlambda} is derived in 
\eqref{lambdaexpansion} in detail.
From the equation \eqref{expanforlambda}, once we know the peak time, then we can determine the length $\lambda>0$ directly, which enables us to see how much the unknown target $x_c$ is far from given S-D pair.
Indeed, the equation \eqref{def_distance} says that the target $x_c$ lies on the sphere 
\begin{equation}\label{radii}
\begin{split}
\left|x_c - \frac{x_d + x_s}{2}\right|^2 
= r^2 \quad
\mbox{with the radius} \quad 
r = \sqrt{vD\lambda(t)^2
- \frac{|x_d - x_s|^2}{4}}. 
\end{split}
\end{equation}
The radius of sphere $r>0$ can be explicitly calculated by the peak time with a given S-D pair. 
Then, 
taking advantage of the information of the radius, 
we can find the unknown target $x_c$ as the vertex point of a tetrahedron with edges determined by the radii $r>0$ in \eqref{radii} (See Figure \ref{fig_tetrahedron} for the tetrahedron). Three S-D pairs on the plane $\partial \Omega$ make a base triangle of the tetrahedron, and the last vertex of the tetrahedron, which is the location of the target, can be explicitly calculated by the length of edges for the tetrahedron. 

As far as we know, there is almost no study on the explicit relationship between the peak time and the distance parameter which contains the information of the distance between the source, the target, and the detector. In \cite{Eom2023a}, confining to the case $\ell=0$, they show that the peak time can be represented by the solution for a certain quadratic polynomial whose coefficients are given by $\lambda>0$ and $k>0$ in \eqref{def_distance} and \eqref{def_alphatilde} but independent of $\beta>0$. 
In Theorems \ref{thm_lzero}, we can see the dependency of
$\beta>0$ on the peak time
\begin{equation*}
t =  k^{-\frac{1}{2}}\lambda - \frac{7}{4}k^{-1} + \frac{\sqrt{k}}{\sqrt{k}+\beta\sqrt{vD}}k^{-1} + \ell + O\left(\lambda^{-1}\right) \quad \mbox{as} \quad \lambda \gg 1. 
\end{equation*}
In \cite{Chen2025}, the authors of this paper derived the equation for the peak time
\begin{equation}\label{previousScheme}
 e^{-\frac{(\sqrt{k}t-\lambda)^2}{t}} = \pi^{\frac{1}{2}} \ell^{-1} \lambda^{-1} t^{\frac{3}{2}} 
\end{equation}
under the conditions $\ell \gg 1$ and $x_{c_3} \gg 1$.
Without these conditions, we generalize the equation \eqref{previousScheme} to 
\begin{equation}\label{ourScheme}
\begin{split}
 e^{-\frac{(\sqrt{k-\ell^{-1}}t-\lambda)^2}{t}} = \pi^{\frac{1}{2}} \ell^{-1} \lambda^{-1} t^{\frac{3}{2}} \left(\frac{x_{c_3}+\beta vDt }{x_{c_3}+\beta vD \lambda (k-\ell^{-1})^{-\frac{1}{2}}}\right)^2.
\end{split}
\end{equation}
Numerically we can see that the equation \eqref{ourScheme} is much more accurate than the previous one \eqref{previousScheme} (See Figure \ref{fig_tps_diffpara}). Based on \eqref{ourScheme}, we obtain the asymptotic expansion of the approximate peak time in Theorem \ref{thm_linfty}, where the approximate peak time is defined as the positive solution $t>0$ to \eqref{ourScheme}.  

The rest of this paper is organized as follows. In Section \ref{sec_asym}, we define the approximate peak time and derive the asymptotic expansions for the approximate peak time. In Section \ref{sec_numeric}, we numerically verify the accuracy of the approximate peak time to the peak time. In Section \ref{sec_inversion}, we derive the asymptotic expansion for the length parameter $\lambda>0$ and propose a direct inversion scheme. In Section \ref{sec_numericexp}, we show several numerical experiments to see the performance of the proposed inversion scheme.
\section{Asymptotic expansion of approximate peak time}\label{sec_asym}
In this section, we derive asymptotic expansion of approximate peak time in the case of fluorescence lifetime $\ell \ll 1$ and $\ell \gg 1$. In the subsection \ref{ellsmall}, we first consider the case $\ell \ll 1$.
\subsection{Case $\ell \ll 1$.}\label{ellsmall}
We first give the expression of zero fluorescence lifetime $\ell =0$ solution to \eqref{um_sys}
\begin{equation}\label{um-point2}
\begin{split}
u_m(x_d,t; x_s) = \frac{ce^{-v \mu_a t}}{16\pi^3 D^2v} \int_0^t \big((t-s)s\big)^{-\frac{3}{2}}  e^{-\frac{\|x_d-x_c\|^2}{4vD(t-s)}} e^{-\frac{\|x_s-x_c\|^2}{4vDs}} 
{\hat {\mathcal K}}(x_{c3};t-s) {\hat {\mathcal K}}(x_{c3};s) \,{\rm d}s,
\end{split}
\end{equation}
where
\begin{equation*}
\left\{
\begin{array}{ll}
{\hat {\mathcal K}}(x_{c_3};t) :=1-\beta\sqrt{\pi vDt} \,\exp{\left(\left(\frac{x_{c_3}+2\beta vDt}{\sqrt{4vDt}}\right)^2\right)}
\mathop{\mathrm{erfc}}
\left(\frac{x_{c_3}+2\beta vDt}{\sqrt{4vDt}}\right),\vspace{5pt} \\
{\rm erfc}(\xi)=\frac{2}{\sqrt{\pi}}\int_\xi^\infty e^{-s^2} \, {\rm d}s, \; \; \xi\in \mathbb{R}.
\end{array}
\right.
\end{equation*}
Here $\Vert\xi\Vert$ is the Euclidean distance of any three-dimensional vector $\xi$, and $x_{c_3}$ is the third component of the target $x_c$. 
See \cite[Section 2]{Eom2023b} for the derivation of \eqref{um-point2} in detail.

When $ 0 \neq \ell \ll 1$, the solution $U_m$ to \eqref{um_sys} satisfies 
\begin{equation*}
\begin{split}
U_m(t) &= \int_0^t \ell^{-1}e^{-\frac{t-s}{\ell}} u_m (s)  \,{\rm d}s  = u_m(t) - \int_0^t  e^{-\frac{t-s}{\ell}} \partial_s u_m (s)  \,{\rm d}s  \\
&= u_m(t) - \ell \partial_t u_m (t) + \ell^2 \partial^2_t u_m (t) + \cdots
\end{split}
\end{equation*}
and
\begin{equation*}
\begin{split}
\partial_t U_m(t) = \partial_t u_m(t) - \ell \partial^2_t u_m (t) + \ell^2 \partial^2_t u_m (t) + \cdots.
\end{split}
\end{equation*}
We approximate the critical point of $U_m$ by the positive solution $t>0$ to
\begin{equation}\label{ellsmalleq}
\begin{split}
\partial_t u_m(t) = \ell \partial^2_t u_m (t).
\end{split}
\end{equation}
In the next Lemma \ref{solbehaviortheorem1}, one can see the asymptotic behavior of ${u}_m$ when the target is far away from S-D pair (see \cite[Theorem 2.2 and Remark 2.3]{Chen2023}).
\begin{lemma}\label{solbehaviortheorem1}
Let $x_d, \; x_s\in\partial\Omega$, and assume that
\begin{equation}\label{uma_constr}
\begin{split}
\Big| |x_d-x_c|^2-|x_s-x_c|^2 \Big| \le C t \quad {\rm for\; some} \quad C>0.
\end{split}
\end{equation}
Define
\begin{equation}\label{defk}
k: = v \mu _a , \quad \lambda^2: = \frac{|x_d-x_c|^2+|x_s-x_c|^2}{2vD}.
\end{equation}
Then, $u_m$ satisfies
\begin{equation}\label{1st_asym_um}
\begin{split}
u_m(t) = u^a_m(t) +
O \left( u^a_m(t) \lambda^{-1}  \right),\;\; \lambda \gg 1,
\end{split}
\end{equation}
where
\begin{equation}\label{defIt}
\begin{split}
&u^a_m(t) = C_0 e^{-kt}  t^{-\frac{3}{2}} e^{-\frac{\lambda^2}{t}}
\left(\frac{x_{c_3}}{x_{c_3}+\beta v D t}\right)^2 \quad \mbox{with} \quad C_0 := \frac{c}{8\pi^\frac{5}{2} v^\frac{1}{2} D^\frac{3}{2} } \left( \frac{1}{|x_d-x_c|} + \frac{1}{|x_s-x_c|} \right).
\end{split}
\end{equation}
\end{lemma}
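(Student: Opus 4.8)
The natural approach is Laplace's method (steepest descent) applied directly to the integral representation \eqref{um-point2}. The factor $\tfrac{c\,e^{-kt}}{16\pi^3D^2v}$, with $k=v\mu_a$, is already outside the integral, so it suffices to analyze
\begin{equation*}
I(t):=\int_0^t\big((t-s)s\big)^{-\frac32}e^{-\phi(s)}\,\hat{\mathcal K}(x_{c_3};t-s)\,\hat{\mathcal K}(x_{c_3};s)\,{\rm d}s,\qquad \phi(s):=\frac{A}{t-s}+\frac{B}{s},
\end{equation*}
where $A:=\tfrac{|x_d-x_c|^2}{4vD}$ and $B:=\tfrac{|x_s-x_c|^2}{4vD}$, so $A+B=\lambda^2/2$ by \eqref{defk} and $\lambda\gg1$ is equivalent to $A,B\to\infty$.

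First I would analyze the phase $\phi$, which is strictly convex on $(0,t)$ since $\phi''(s)=\tfrac{2A}{(t-s)^3}+\tfrac{2B}{s^3}>0$; solving $\phi'(s)=\tfrac{A}{(t-s)^2}-\tfrac{B}{s^2}=0$ gives the unique interior minimizer $s^*=\tfrac{\sqrt B}{\sqrt A+\sqrt B}\,t$, at which
\begin{equation*}
t-s^*=\frac{\sqrt A}{\sqrt A+\sqrt B}\,t,\qquad \phi(s^*)=\frac{(\sqrt A+\sqrt B)^2}{t},\qquad \phi''(s^*)=\frac{2(\sqrt A+\sqrt B)^4}{t^3\sqrt{AB}}.
\end{equation*}
The integrand decays superexponentially at $s=0$ and $s=t$ (the essential singularities of $e^{-\phi}$ dominate the algebraic blow-up of $\big((t-s)s\big)^{-\frac32}$), and $\hat{\mathcal K}(x_{c_3};\cdot)$ is smooth and bounded on $[0,t]$, so the classical Laplace expansion gives
\begin{equation*}
I(t)=g(s^*)\,e^{-\phi(s^*)}\sqrt{\frac{2\pi}{\phi''(s^*)}}\,\big(1+O(\lambda^{-1})\big),\qquad g(s):=\big((t-s)s\big)^{-\frac32}\hat{\mathcal K}(x_{c_3};t-s)\hat{\mathcal K}(x_{c_3};s).
\end{equation*}
A short computation shows $\big((t-s^*)s^*\big)^{-\frac32}\sqrt{2\pi/\phi''(s^*)}=\sqrt\pi\big(\tfrac1{\sqrt A}+\tfrac1{\sqrt B}\big)t^{-\frac32}=2\sqrt{\pi vD}\big(\tfrac1{|x_d-x_c|}+\tfrac1{|x_s-x_c|}\big)t^{-\frac32}$, and multiplying by $\tfrac{c\,e^{-kt}}{16\pi^3D^2v}$ reproduces exactly the constant $C_0$ and the prefactor $e^{-kt}t^{-\frac32}$ of \eqref{defIt}.

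It remains to simplify $e^{-\phi(s^*)}$ and the product $\hat{\mathcal K}(x_{c_3};t-s^*)\hat{\mathcal K}(x_{c_3};s^*)$, and here the hypothesis \eqref{uma_constr} is used. Since $|A-B|\le C't$ while $A+B=\lambda^2/2$, one gets $(\sqrt A+\sqrt B)^2=\lambda^2-O\big(t^2/\lambda^2\big)$ and $s^*=\tfrac t2\big(1+O(t/\lambda^2)\big)$; in the time window where the peak time lies, $t$ is of order $\lambda$, so both corrections are $O(\lambda^{-1})$, giving $e^{-\phi(s^*)}=e^{-\lambda^2/t}\big(1+O(\lambda^{-1})\big)$ and $\hat{\mathcal K}(x_{c_3};t-s^*)\hat{\mathcal K}(x_{c_3};s^*)=\hat{\mathcal K}(x_{c_3};t/2)^2\big(1+O(\lambda^{-1})\big)$. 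Finally the large-argument expansion $e^{\xi^2}\mathop{\mathrm{erfc}}(\xi)=\tfrac1{\sqrt\pi\,\xi}\big(1+O(\xi^{-2})\big)$, applied with $\xi=\tfrac{x_{c_3}+\beta vDt}{\sqrt{2vDt}}$, reduces $\hat{\mathcal K}(x_{c_3};t/2)$ to $\tfrac{x_{c_3}}{x_{c_3}+\beta vDt}$ and hence produces the squared boundary factor of \eqref{defIt}. Collecting the Laplace remainder with the $O(\lambda^{-1})$ errors from the phase, the saddle location, and the $\mathop{\mathrm{erfc}}$ truncation yields $u_m(t)=u_m^a(t)+O\big(u_m^a(t)\lambda^{-1}\big)$.

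The step I expect to be the main obstacle is the \emph{uniform} justification of the Laplace expansion as $\lambda\to\infty$, together with an honest accounting of every error source: one needs a quantitative lower bound for $\phi(s)-\phi(s^*)$ away from $s^*$ to control the tails, uniform bounds on $\hat{\mathcal K}$ and its derivatives near the endpoints $s=0,t$, and care in verifying that the Laplace remainder is genuinely only $O(\lambda^{-1})$ --- the effective large parameter is not simply $\lambda^2$, since the Gaussian profile widens like $t^{\frac32}/\lambda$ --- and that none of the subsequent simplifications ($(\sqrt A+\sqrt B)^2\approx\lambda^2$, $s^*\approx t/2$, truncation of the $\mathop{\mathrm{erfc}}$ series) degrades it further. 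This is essentially the content of \cite[Theorem 2.2 and Remark 2.3]{Chen2023}.
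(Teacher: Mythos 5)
Note first that the paper does not prove this lemma internally: it is quoted verbatim from \cite[Theorem 2.2 and Remark 2.3]{Chen2023}, so there is no in-paper argument to compare against. Your Laplace-method reconstruction is the natural route and your computations check out: the saddle $s^*=\tfrac{\sqrt B}{\sqrt A+\sqrt B}t$, the values $\phi(s^*)=(\sqrt A+\sqrt B)^2/t$ and $\phi''(s^*)=2(\sqrt A+\sqrt B)^4/(t^3\sqrt{AB})$, the recombination of the prefactor into $C_0e^{-kt}t^{-3/2}$, the identity $(\sqrt A+\sqrt B)^2=\lambda^2-(\sqrt A-\sqrt B)^2$ with $(\sqrt A-\sqrt B)^2=O(t^2/\lambda^2)$ coming from \eqref{uma_constr}, and the reduction of $\hat{\mathcal K}(x_{c_3};t/2)$ to $x_{c_3}/(x_{c_3}+\beta vDt)$ via the large-argument $\mathop{\mathrm{erfc}}$ expansion all reproduce \eqref{defIt} correctly. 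The only caveat, which you yourself flag, is that the $O(\lambda^{-1})$ relative error is obtained only under implicit restrictions left silent in the statement: $t$ must stay in the peak-time window $t=O(\lambda)$ (otherwise the corrections of size $t/\lambda^2$ from the Laplace remainder and from \eqref{uma_constr} are not $O(\lambda^{-1})$), and the $\mathop{\mathrm{erfc}}$ truncation needs a depth scaling such as $x_{c_3}\sim\sqrt{vD}\,\lambda$, which the paper indeed invokes later in the proof of Proposition \ref{ellzero}; making these hypotheses explicit and carrying out the uniform tail estimates is exactly the content of the cited theorem, so your sketch is a faithful, if not self-contained, account of it.
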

Taking advantage of the asymptotic profile $u_m^a$, we calculate the  critical point by
\begin{equation}\label{cubicpoly}
\begin{split}
\frac{d u_m^a}{ dt } &= P(t) u_m^a, \quad
P(t) := - k  - \frac{3}{2} t^{-1} + \lambda^2 t^{-2} 
- \frac{2\beta v D}{x_{c_3} + \beta v D t} = 0.
\end{split}
\end{equation}
The unique existence of the positive solution to $P(t)=0$ 
is guaranteed by the positivity of the physical constants in \eqref{defk} (see \cite[Theorem 3.1]{Chen2023}). In the following definition, we define approximate peak time by replacing $u_m$ as $u_m^a$ in the equation \eqref{ellsmalleq} and using \eqref{cubicpoly} in the case of  $\ell \ll 1$.
\begin{definition}\label{def_lzero}
We define approximate peak time $t^{p}_0$ by the positive solution to 
\begin{equation}\label{ellsmalldef}
\begin{split}
P(t) = \ell \left( P^{'}(t) + P(t)^2 \right)
\end{split}
\end{equation}
in the case of a small fluorescence lifetime.
\end{definition}
In the next Theorem \ref{thm_lzero}, we derive the asymptotic expansion of the approximate peak time $t^{p}_0$ when $\lambda \gg 1$.
\begin{theorem}\label{thm_lzero}
The approximate peak time for a small fluorescence lifetime satisfies
\begin{equation}\label{asympofpeak1}
\begin{split}
t^{p}_0 =  k^{-\frac{1}{2}}\lambda - \frac{7}{4}k^{-1} + \frac{\sqrt{k}}{\sqrt{k}+\beta\sqrt{vD}}k^{-1} + \ell + O\left(\lambda^{-1}\right) \quad \mbox{as} \quad \lambda \gg 1. 
\end{split}
\end{equation}
\end{theorem}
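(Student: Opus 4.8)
The plan is to view \eqref{ellsmalldef} as a perturbation of the critical‑point equation $P(t)=0$ of \eqref{cubicpoly} and to extract its positive root by a dominant‑balance expansion in the large parameter $\lambda$. Multiplying \eqref{ellsmalldef} through by $t^{2}(x_{c_{3}}+\beta vDt)$ to clear denominators, the top‑order balance of the resulting equation is $-kt^{2}+\lambda^{2}=0$: every remaining term of $P$, and every term of $\ell P'$ and $\ell P^{2}$, is of lower order once $t\asymp\lambda$. Hence $t^{p}_{0}=k^{-1/2}\lambda\,(1+o(1))$; writing $t^{p}_{0}=k^{-1/2}\lambda+c$ with $c=o(\lambda)$ and substituting back, a Taylor expansion in $\lambda^{-1}$ shows $c=O(1)$, with its constant part $c_{0}$ fixed by the coefficient of $\lambda^{-1}$ in \eqref{ellsmalldef}.

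For that matching I expand, at $t=k^{-1/2}\lambda+c_{0}+O(\lambda^{-1})$: $\lambda^{2}t^{-2}-k=-2c_{0}k^{3/2}\lambda^{-1}+O(\lambda^{-2})$, $-\tfrac32 t^{-1}=-\tfrac32 k^{1/2}\lambda^{-1}+O(\lambda^{-2})$, and — using that in the deep‑target regime behind \eqref{uma_constr} (lateral source–detector offsets $o(\lambda)$, so $|x_{d}-x_{c}|,|x_{s}-x_{c}|\sim\sqrt{vD}\,\lambda$ and hence $x_{c_{3}}=\sqrt{vD}\,\lambda\,(1+o(1))$ by \eqref{defk}) — $-\tfrac{2\beta vD}{x_{c_{3}}+\beta vDt}=-\tfrac{2\beta\sqrt{vD}\sqrt k}{\sqrt k+\beta\sqrt{vD}}\lambda^{-1}+O(\lambda^{-2})$. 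Thus $P(t)=O(\lambda^{-1})$, while $P'(t)=-2\lambda^{2}t^{-3}+O(\lambda^{-2})=-2k^{3/2}\lambda^{-1}+O(\lambda^{-2})$ and $P(t)^{2}=O(\lambda^{-2})$, so \eqref{ellsmalldef} at order $\lambda^{-1}$ reduces to $-2c_{0}k^{3/2}-\tfrac32 k^{1/2}-\tfrac{2\beta\sqrt{vD}\sqrt k}{\sqrt k+\beta\sqrt{vD}}=-2\ell k^{3/2}$. Solving for $c_{0}$ and using the identity $\tfrac{\beta\sqrt{vD}}{k(\sqrt k+\beta\sqrt{vD})}=\tfrac1k-\tfrac{\sqrt k}{k(\sqrt k+\beta\sqrt{vD})}$ gives exactly $c_{0}=-\tfrac74 k^{-1}+\tfrac{\sqrt k}{\sqrt k+\beta\sqrt{vD}}k^{-1}+\ell$, which is \eqref{asympofpeak1}; the summand $+\ell$ is the time delay $U_{m}(t)\approx u_{m}(t-\ell)$ visible in the excerpt's expansion $U_{m}=u_{m}-\ell\partial_{t}u_{m}+\cdots$.

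To make this rigorous rather than formal, I would set $s:=t/(k^{-1/2}\lambda)$, $\varepsilon:=\lambda^{-1}$, write the cleared‑denominator form of \eqref{ellsmalldef} as $F(s,\varepsilon)=0$ with $F$ smooth, check $F(1,0)=0$ and $\partial_{s}F(1,0)\neq0$ (the latter being essentially the strict monotonicity of $P$ at its root used for \cite[Theorem~3.1]{Chen2023}), and apply the implicit function theorem: this delivers simultaneously the existence and uniqueness of the positive solution near $k^{-1/2}\lambda$ (as presumed in Definition~\ref{def_lzero}) and the smooth dependence on $\varepsilon$ that legitimizes the termwise expansion and the $O(\lambda^{-1})$ remainder. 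Alternatively one substitutes $t^{p}_{0}=k^{-1/2}\lambda+c_{0}+\lambda^{-1}c_{1}$, shows the equation for $c_{1}$ is solvable with $c_{1}$ bounded, and closes by a contraction/continuity estimate.

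The main obstacle is the Robin term $-\tfrac{2\beta vD}{x_{c_{3}}+\beta vDt}$ in the $\lambda^{-1}$ matching: its coefficient — and therefore the factor $\tfrac{\sqrt k}{\sqrt k+\beta\sqrt{vD}}$, which is what encodes the boundary reflection and is absent from $\beta$‑independent earlier treatments such as \cite{Eom2023a} — depends on the ratio of $\beta vDt\asymp\beta vDk^{-1/2}\lambda$ to $x_{c_{3}}$, so one must first pin down that $x_{c_{3}}$ and $\sqrt{vD}\,\lambda$ are asymptotically proportional; treating $x_{c_{3}}$ as bounded instead would yield a different constant. Everything else is routine Taylor expansion and algebra.
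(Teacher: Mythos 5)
Your proposal is correct and takes essentially the same route as the paper: a dominant-balance expansion $t^{p}_0=k^{-\frac{1}{2}}\lambda+c_0+O(\lambda^{-1})$ of the positive root of \eqref{ellsmalldef}, using $x_{c_3}\sim\sqrt{vD}\,\lambda$ to evaluate the Robin term and the balance $\ell P'\approx-2\ell k^{\frac{3}{2}}\lambda^{-1}$ to produce the $+\ell$ shift, and your order-$\lambda^{-1}$ matching yields exactly the constant in \eqref{asympofpeak1}. The only difference is organizational: the paper first expands the root of $P(t)=0$ (Proposition~\ref{ellzero}, perturbing the quadratic root $\tilde t$) and then computes the $\ell$-correction $\varepsilon=\ell+O(\lambda^{-1})$, whereas you match all contributions at order $\lambda^{-1}$ in a single step.
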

\begin{proof}
We first study the asymptotic behavior of the positive solution to $P(t)=0$ in \eqref{cubicpoly}.
\begin{proposition}\label{ellzero}
The positive solution $t_0>0$ to 
\begin{equation}\label{betazeroeq}
- k  - \frac{3}{2} t^{-1} + \lambda^2 t^{-2} - \frac{2\beta v D}{x_{c_3} + \beta v D t} = 0
\end{equation}
satisfies   
\begin{equation}\label{ellzeroasymp}
t_0 =  k^{-\frac{1}{2}}\lambda - \frac{7}{4}k^{-1} + \frac{\sqrt{k}}{\sqrt{k}+\beta\sqrt{vD}}k^{-1}  + O\left(\lambda^{-1}\right) \quad \mbox{as} \quad \lambda \gg 1. 
\end{equation}
\end{proposition}
\begin{proof}
Define $\tilde{t}>0$ as the positive solution to
$$
-k  - \frac{3}{2} t^{-1} + \lambda^2 t^{-2} =0.
$$
Then we obtain
\begin{equation}\label{betazeroasymp}
\begin{split}
\tilde{t} = \frac{1}{2k} \left( \left[ 4k\lambda^2 + \left(\frac{3}{2}\right)^2 \right]^\frac{1}{2} - \frac{3}{2}  \right) 
= k^{-\frac{1}{2}}\lambda - \frac{3}{4}k^{-1} + O\left( \lambda^{-1} \right) 
\end{split}
\end{equation}
as $\lambda \gg 1$.
Set $t_0 := \tilde{t} + \varepsilon$ with $\varepsilon = o(\lambda)$ as $\lambda \gg 1$.
By \eqref{betazeroeq}, we obtain
\begin{equation}\label{P(t)expan}
\begin{split}
P(t_0) &= - k  - \frac{3}{2} \left(\tilde{t} + \varepsilon\right)^{-1} + \lambda^2 \left(\tilde{t} + \varepsilon\right)^{-2} 
- 2\beta v D  \left( x_{c_3} + \beta v D [\tilde{t} + \varepsilon]\right)^{-1} \\
 &=
 -2\beta v D \left( x_{c_3} + \beta v D \tilde{t} \right)^{-1}  + \left[ -2 \lambda^2 \tilde{t}^{-3} + \frac{3}{2}\tilde{t}^{-2} + 2(\beta v D)^2  \left( x_{c_3} + \beta v D \tilde{t} \right)^{-2}\right]\varepsilon +  O\left( \lambda^{-2}\varepsilon^2 \right) \\
  &= 0.
\end{split}
\end{equation}
This together with  \eqref{defk} and \eqref{betazeroasymp} implies that
\begin{equation}
\begin{split}
\varepsilon &= \frac{2\beta v D}{ x_{c_3} + \beta v D \tilde{t}} \times
\frac{1}{-2 \lambda^2 \tilde{t}^{-3} + \frac{3}{2}\tilde{t}^{-2} + 2(\beta v D)^2  \left( x_{c_3} + \beta v D \tilde{t} \right)^{-2}} + \cdots \\
 &= -\frac{\beta v D}{ x_{c_3} + \beta v D \tilde{t}} \times \frac{\tilde{t}^3}{\lambda^2} +  O\left( \lambda^{-1} \right) \quad \mbox{as} \quad \lambda \gg 1.
\end{split}
\end{equation}
Note that $x_{c_3} \sim \sqrt{vD} \lambda$. 
By \eqref{betazeroasymp}, the dominant part of $\varepsilon$ is 
\begin{equation}
\begin{split}
-\frac{\beta v D}{ x_{c_3} + \beta v D \tilde{t}} \times \frac{\tilde{t}^3}{\lambda^2}  &= -\frac{\beta v D}{ \sqrt{vD} + \beta v D k^{-\frac{1}{2}}} k^{-\frac{3}{2}} +  O\left( \lambda^{-1} \right)\\
&= -\frac{\beta\sqrt{vD}}{\sqrt{k} + \beta\sqrt{vD}}k^{-1} 
+  O\left( \lambda^{-1} \right) \quad \mbox{as} \quad \lambda \gg 1.
\end{split}
\end{equation}
Then we obtain the asymptotic expansion 
\begin{equation}\label{peaktime formula_zerolifetime}
\begin{split}
t_0 &= \tilde{t} + \varepsilon 
=  k^{-\frac{1}{2}}\lambda - \frac{3}{4}k^{-1} - \frac{\beta\sqrt{vD}}{\sqrt{k} + \beta\sqrt{vD}}k^{-1} +  O\left( \lambda^{-1} \right) \\
&=  k^{-\frac{1}{2}}\lambda - \frac{7}{4}k^{-1} + \frac{\sqrt{k}}{\sqrt{k}+\beta\sqrt{vD}}k^{-1}  +  O\left( \lambda^{-1} \right) \quad \mbox{as} \quad \lambda \gg 1,
\end{split}
\end{equation}
and the proof of Proposition \ref{ellzero} is complete.
\end{proof}
Set $t^p_0 = t_0 + \varepsilon$  with $\varepsilon = o(\lambda)$ as $\lambda \gg 1$. We find the asymptotic behavior of $\varepsilon$ from the equation \eqref{ellsmalldef}
\begin{equation}\label{tripleP}
\begin{split}
P(t^p_0) - \ell \left( P^{'}(t^p_0) + P(t^p_0)^2  \right) = 0.
\end{split}
\end{equation}
In a similar way as in \eqref{P(t)expan}, we obtain
\begin{equation}\label{P(tp)expan1}
\begin{split}
P(t^p_0) &= - k  - \frac{3}{2} \left(t_0 + \varepsilon\right)^{-1} + \lambda^2 \left(t_0 + \varepsilon\right)^{-2} 
- 2\beta v D  \left( x_{c_3} + \beta v D [t_0 + \varepsilon]\right)^{-1} \\
 &=\left[ -2 \lambda^2 t_0^{-3} + \frac{3}{2}t_0^{-2} + 2(\beta v D)^2  \left( x_{c_3} + \beta v D t_0 \right)^{-2}\right]\varepsilon +  O\left( \lambda^{-2}\varepsilon^2 \right) \\
  &= -2 \lambda^2 t_0^{-3}\varepsilon + O\left( \lambda^{-2}\varepsilon \right) + O\left( \lambda^{-2}\varepsilon^2 \right)
\end{split}
\end{equation}
and
\begin{equation}\label{P(tp)expan2}
\begin{split}
P^{'}(t^p_0) &= \frac{3}{2} \left(t_0 + \varepsilon\right)^{-2} -2 \lambda^2 \left(t_0 + \varepsilon\right)^{-3} 
+ 2 (\beta v D)^2  \left( x_{c_3} + \beta v D [t_0 + \varepsilon]\right)^{-2} \\
 &=   -2 \lambda^2 t_0^{-3} + \frac{3}{2}t_0^{-2} + 2(\beta v D)^2  \left( x_{c_3} + \beta v D t_0 \right)^{-2} +O\left( \lambda^{-2}\varepsilon \right) \\
 &= -2 \lambda^2 t_0^{-3} +  O\left( \lambda^{-2} \right) + O\left( \lambda^{-2}\varepsilon \right)
\end{split}
\end{equation}
as $\lambda \gg 1$. Since 
$$
P(t^p_0)^2 =  O\left( \lambda^{-2}\varepsilon^2 \right) \quad \mbox{as} \quad \lambda \gg 1,
$$
we obtain from \eqref{tripleP} that
$$
2 \lambda^2 t_0^{-3}\varepsilon   -2 \ell \lambda^2 t_0^{-3} =  O\left( \lambda^{-2} \right) + O\left( \lambda^{-2}\varepsilon \right)+ O\left( \lambda^{-2}\varepsilon^2 \right).
$$
Then we have 
$$
\varepsilon = \ell + O\left( \lambda^{-1} \right)  \quad \mbox{as} \quad \lambda \gg 1.
$$
This together with \eqref{ellzeroasymp} implies
\begin{equation*}
\begin{split}
t^{p}_0 &= t_0 + \varepsilon \\
&=k^{-\frac{1}{2}}\lambda - \frac{7}{4}k^{-1} + \frac{\sqrt{k}}{\sqrt{k}+\beta\sqrt{vD}}k^{-1} + \ell + O\left(\lambda^{-1}\right) \quad \mbox{as} \quad \lambda \gg 1,
\end{split}
\end{equation*}
and the proof of Theorem \ref{thm_lzero} is complete.
\end{proof}
\subsection{Case $\ell \gg 1$.}
Suppose $\ell \neq 0$. Since the solution $U_m$ to \eqref{um_sys} satisfies 
\begin{equation*}
\begin{split}
U_m(t) &= \int_0^t \ell^{-1}e^{-\frac{t-s}{\ell}} u_m (s)  \,{\rm d}s,  \\
\partial_t U_m(t)&= \ell^{-1} u_m(t) + \int_0^t 
(-\ell^{-2})e^{-\frac{t-s}{\ell}} u_m (s) \,{\rm d}s,
\end{split}
\end{equation*}
we find the critical point of $U_m$ by the positive solution to 
\begin{equation*}
\begin{split}
u_m(t) = \ell^{-1} \int_0^t 
e^{-\frac{t-s}{\ell}} u_m (s) \,{\rm d}s.
\end{split}
\end{equation*}
By replacing $u_m$ as $u^a_m$ in \eqref{defIt}, we obtain the peak time approximately by the positive solution to
\begin{equation}\label{t-deri_Um}
\begin{split}
 u^a_m (t) = \ell^{-1} \int_0^t  e^{-\frac{t-s}{\ell}} u^a_m (s) \,{\rm d}s.
\end{split}
\end{equation}
Write $u_m^a$ in \eqref{defIt} as
$$
 u^a_m(t) = e^{-(kt+\lambda^2 t^{-1})} f(t) \quad \mbox{with} \quad f(t) := C_0
 t^{-\frac{3}{2}}\left(\frac{x_{c_3}}{x_{c_3}+\beta v D t}\right)^2.
$$
In the next Lemma \ref{integralest}, one can see the asymptotic behavior of the time integration of $u_m^a$, which is determined by the value at $t = \lambda k^{-1/2}$ (see \cite[Theorem 2.3 ]{Chen2025}).
\begin{lemma}\label{integralest}
Assume $t>\lambda k^{-\frac{1}{2}}$.
Then $u^a_m$ of \eqref{defIt} satisfies
\begin{equation}
\label{integralbehavior}
\int_0^t  e^{-(ks+\lambda^2 s^{-1})} f(s) \; {\rm d}s = k^{-\frac{3}{4}} \left( \pi \lambda \right)^\frac{1}{2} 
 e^{-2 \lambda \sqrt{k}} f( \lambda k^{-\frac{1}{2}})  + O\left( \lambda^{-\frac{3}{2}} e^{-2 \lambda \sqrt{k} }  f( \lambda k^{-\frac{1}{2}}) \right) 
\end{equation}
as $\lambda \gg 1$, where $k>0$ and $\lambda$ are as in \eqref{defk}.
\end{lemma}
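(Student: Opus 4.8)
Lemma~\ref{integralest} is a Laplace-method (Watson's-lemma) estimate for the integral $J:=\int_0^t e^{-\phi(s)}f(s)\,{\rm d}s$ with phase $\phi(s):=ks+\lambda^2 s^{-1}$, large parameter $\lambda$, and positive smooth amplitude $f(s)=C_0 s^{-3/2}g(s)$, where $g(s):=\bigl(x_{c_3}/(x_{c_3}+\beta v D s)\bigr)^2$ (see \eqref{defIt}). The plan is, first, to analyze the phase: $\phi'(s)=k-\lambda^2 s^{-2}$ vanishes only at $s_0:=\lambda k^{-1/2}$, which by the hypothesis $t>\lambda k^{-1/2}$ lies in the interior $(0,t)$; moreover $\phi(s_0)=2\lambda\sqrt k$ (this produces the factor $e^{-2\lambda\sqrt k}$), $\phi''(s_0)=2\lambda^2 s_0^{-3}=2k^{3/2}\lambda^{-1}>0$, and $\phi$ is strictly convex on $(0,\infty)$. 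The scaling $s=s_0 w$ turns the phase into $\phi=\mu(w+w^{-1})$ with $\mu:=\lambda\sqrt k$ and minimum $2$ attained at $w=1$, so that $J=C_0 s_0^{-1/2}\int_0^{T}w^{-3/2}g(s_0 w)\,e^{-\mu(w+w^{-1})}\,{\rm d}w$ with $T:=tk^{1/2}\lambda^{-1}>1$.

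Next I would extract the leading term exactly and estimate the remaining deviations. Writing $g(s_0 w)=g(s_0)+\bigl(g(s_0 w)-g(s_0)\bigr)$, the main contribution is $C_0 s_0^{-1/2}g(s_0)\int_0^{\infty}w^{-3/2}e^{-\mu(w+w^{-1})}\,{\rm d}w$, and the closed form $\int_0^\infty w^{-3/2}e^{-\mu(w+w^{-1})}\,{\rm d}w=2K_{1/2}(2\mu)=\sqrt{\pi/\mu}\,e^{-2\mu}$ (an elementary modified Bessel value) reproduces, after reverting the scaling, exactly $k^{-3/4}(\pi\lambda)^{1/2}e^{-2\lambda\sqrt k}f(\lambda k^{-1/2})$ — equivalently, the classical Laplace leading term $f(s_0)\,e^{-\phi(s_0)}\sqrt{2\pi/\phi''(s_0)}$. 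It then remains to bound two terms. The truncation $C_0 s_0^{-1/2}g(s_0)\int_T^\infty w^{-3/2}e^{-\mu(w+w^{-1})}\,{\rm d}w$ is, since $w+w^{-1}-2=(w-1)^2/w$ is strictly increasing on $(1,\infty)$, at most $e^{-2\mu}e^{-\mu(T-1)^2/T}$ times an algebraic factor, hence negligible compared with the main term provided $T$ stays bounded away from $1$ (which holds for the peak times treated in Section~\ref{sec_asym}, and is where one uses a little more than the bare hypothesis $t>\lambda k^{-1/2}$). The amplitude-variation term $C_0 s_0^{-1/2}\int_0^{T}w^{-3/2}\bigl(g(s_0 w)-g(s_0)\bigr)e^{-\mu(w+w^{-1})}\,{\rm d}w$ has an amplitude vanishing at the saddle $w=1$, so its leading Laplace contribution is zero; I would localize it to a window $|w-1|\le\delta$ with $\mu^{-1/2}\ll\delta\ll1$ (the complement costing only $O(e^{-c\mu\delta^2})$), Taylor-expand $g(s_0 w)$ and the phase about $w=1$, and rescale $w-1=u\,\mu^{-1/2}$; the first-order terms, being odd in $u$, drop out against $e^{-u^2}$, and collecting what survives (which reduces essentially to $s_0^2 g''(s_0)$) gives a term of lower order, namely the error recorded in \eqref{integralbehavior}.

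The step I expect to be the main obstacle is the amplitude-variation estimate — pinning down the \emph{precise} order of the surviving Laplace correction. Because the large parameter sits inside the phase it governs simultaneously the exponent, the width of the Gaussian peak, the amplitude (through $s_0=\lambda k^{-1/2}$, which makes $g(s_0)$, $s_0 g'(s_0)$, $s_0^2 g''(s_0),\dots$ comparable in size), and the integration interval, so the Taylor remainders of $g$ and of $\phi$ must be controlled \emph{uniformly} over the whole localization window, and the order of the correction — assembled from $f''(s_0)$, $\phi''''(s_0)$, $(\phi'''(s_0))^2$ and cross terms, with the exponent $-\tfrac32$ of the weight $s^{-3/2}$ forcing a partial cancellation — must be settled by explicit bookkeeping. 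By comparison, the truncation estimate and the identification of the leading term are routine. This is precisely \cite[Theorem~2.3]{Chen2025}, whose argument one follows.
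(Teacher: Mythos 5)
The paper itself contains no proof of Lemma \ref{integralest}: it is simply quoted from \cite[Theorem 2.3]{Chen2025}. So there is no in-paper argument to compare with; your Laplace/Watson route (scaling $s=s_0w$ with $s_0=\lambda k^{-\frac12}$, $\mu=\lambda\sqrt k$, the exact identity $\int_0^\infty w^{-\frac32}e^{-\mu(w+w^{-1})}\,{\rm d}w=\sqrt{\pi/\mu}\,e^{-2\mu}$, then truncation plus amplitude-variation estimates) is the natural one and your leading-term computation (critical point, $\phi(s_0)=2\lambda\sqrt k$, $\phi''(s_0)=2k^{\frac32}\lambda^{-1}$, prefactor $k^{-\frac34}(\pi\lambda)^{\frac12}$) is correct.

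There is, however, a genuine gap at exactly the step you flag as the main obstacle: the stated error order is never established, and your own outline in fact points to a larger error. Carrying out the second-order Laplace bookkeeping for the amplitude $a(w)=w^{-\frac32}g(s_0w)$ about $w=1$ (with $h(w)=w+w^{-1}$, $h''(1)=2$, $h'''(1)=-6$, $h''''(1)=24$), the $g(s_0)$- and $s_0g'(s_0)$-contributions cancel — this is the $\nu=-\tfrac12$ cancellation you invoke — but the surviving first correction to the integral is
\[
C_0 s_0^{-\frac12}\,e^{-2\mu}\sqrt{\pi/\mu}\;\frac{s_0^2 g''(s_0)}{4\mu},
\qquad \mu=\lambda\sqrt k,
\]
and since $s_0^2g''(s_0)/g(s_0)=6(\beta vDs_0)^2/(x_{c_3}+\beta vDs_0)^2$ is of order one for $\beta>0$ (both when $x_{c_3}$ is fixed and when $x_{c_3}\sim\sqrt{vD}\,\lambda$; recall $x_{c_3}\le\sqrt{vD}\,\lambda$ always), this is a \emph{relative} correction of order $\mu^{-1}=O(\lambda^{-1})$, i.e.\ an absolute contribution of order $\lambda^{-\frac12}e^{-2\lambda\sqrt k}f(\lambda k^{-\frac12})$ — one full power of $\lambda$ larger than the $O\bigl(\lambda^{-\frac32}e^{-2\lambda\sqrt k}f(\lambda k^{-\frac12})\bigr)$ recorded in \eqref{integralbehavior}. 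Hence the closing sentence of your sketch ("gives \dots the error recorded in \eqref{integralbehavior}") does not follow from the computation you outline; to reach the lemma's error bound you would need either a further cancellation you have not exhibited or additional hypotheses, presumably contained in the precise statement of \cite[Theorem 2.3]{Chen2025}. A secondary point, which you partly concede: your truncation estimate needs $T=tk^{\frac12}\lambda^{-1}$ bounded away from $1$ (or at least $t-\lambda k^{-\frac12}$ large compared with $\lambda^{\frac12}$), whereas the statement assumes only $t>\lambda k^{-\frac12}$; as $T\downarrow1$ the missing half of the Gaussian peak is comparable to the main term, so this extra assumption must be made explicit rather than imported informally from the application in Section \ref{sec_asym}.
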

By replacing $k$ as $(k-\ell)^{-1/2}$ in Lemma  \ref{integralest},
for 
\begin{equation}
\label{ellcondition1}
t>\lambda (k-\ell^{-1})^{-\frac{1}{2}} \quad \mbox{and} \quad \ell > k^{-1},
\end{equation}
we obtain 
\begin{equation*}
\begin{split}
\int_0^t  e^{-(t-s)\ell^{-1}} u^a_m (s) \,{\rm d}s &= 
 e^{-t \ell^{-1}} \int_0^t e^{s\ell^{-1}} u^a_m (s) \,{\rm d}s \\
&=  e^{-t \ell^{-1}} \int_0^t  e^{-\left([k-\ell^{-1}]s + \lambda^2 s^{-1} \right)} f(s) \,{\rm d}s \\
&\sim e^{-t \ell^{-1}} (k-\ell^{-1})^{-\frac{3}{4}} \left( \pi \lambda \right)^\frac{1}{2} 
 e^{-2 \lambda \sqrt{k-\ell^{-1}}} f( \lambda (k-\ell^{-1})^{-\frac{1}{2}}) 
\end{split}
\end{equation*}
This together with \eqref{t-deri_Um} implies
$$
u_m^a(t) = e^{-(kt+\lambda^2 t^{-1})} f(t) \sim \ell^{-1} e^{-t \ell^{-1}}
(k-\ell^{-1})^{-\frac{3}{4}} \left( \pi \lambda \right)^\frac{1}{2} 
 e^{-2 \lambda \sqrt{k-\ell^{-1}}} f( \lambda (k-\ell^{-1})^{-\frac{1}{2}}). 
$$
Then we obtain the equation for the peak time approximately
\begin{equation}\label{NonlinearScheme}
\begin{split}
 e^{-\frac{(\sqrt{k-\ell^{-1}}t-\lambda)^2}{t}} = \pi^{\frac{1}{2}} \ell^{-1} \lambda^{-1} t^{\frac{3}{2}} \left(\frac{x_{c_3}+\beta vDt }{x_{c_3}+\beta vD \lambda (k-\ell^{-1})^{-\frac{1}{2}}}\right)^2.
\end{split}
\end{equation}
If the fluorescence lifetime $\ell>0$ satisfies
\begin{equation}\label{ellcondition2}
\begin{split}
\ell>\pi^{\frac{1}{2}}(k-\ell^{-1})^{-\frac{3}{4}}\lambda^{\frac{1}{2}},
\end{split}
\end{equation}
then the unique existence of the positive solution to \eqref{NonlinearScheme} is guaranteed.
\begin{definition}\label{def_linfty}
Assume \eqref{ellcondition1} and \eqref{ellcondition2}. We define approximate peak time $t^{p}_\infty$ by the positive solution to \eqref{NonlinearScheme} in the case of a large fluorescence lifetime.
\end{definition}
In the next Theorem \ref{thm_linfty}, we derive the asymptotic expansion of the approximate peak time $t^p_\infty$ when $\lambda \gg 1$.
\begin{theorem}\label{thm_linfty}
The approximate peak time for a large fluorescence lifetime satisfies
\begin{equation}\label{asympofpeak2}
\begin{split}
t^{p}_\infty = (k-\ell^{-1})^{-\frac{1}{2}} \lambda + (k-\ell^{-1})^{-\frac{3}{4}} \alpha_\lambda \lambda^\frac{1}{2} 
+ O\left( \lambda^{-\frac{1}{2}} \right) \quad \mbox{as} \quad \lambda \gg 1, 
\end{split}
\end{equation}
where $\alpha_\lambda>0$ is
\begin{equation}\label{defalpha}
\begin{split}
\alpha_\lambda := \left(-\log\left[ (\pi^{\frac{1}{2}} \ell^{-1} (k-\ell^{-1})^{-\frac{3}{4}}) \lambda^\frac{1}{2} \right] \right)^\frac{1}{2} < \infty.
\end{split}
\end{equation}
\end{theorem}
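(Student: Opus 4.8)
The plan is to study the defining equation \eqref{NonlinearScheme} of $t^{p}_\infty$ directly, by taking logarithms and performing an iterated perturbation expansion about the leading root, in the same spirit as the proof of Theorem \ref{thm_lzero}. Write $\kappa:=k-\ell^{-1}>0$. On the interval $t>\kappa^{-1/2}\lambda$ fixed by \eqref{ellcondition1}, the left-hand side $e^{-(\sqrt{\kappa}t-\lambda)^{2}/t}$ is strictly decreasing while the right-hand side is strictly increasing, and at $t=\kappa^{-1/2}\lambda$ they equal $1$ and $\pi^{1/2}\ell^{-1}\kappa^{-3/4}\lambda^{1/2}<1$ respectively, by \eqref{ellcondition2}; this is exactly the monotonicity used for the existence and uniqueness of $t^{p}_\infty$ mentioned after \eqref{ellcondition2}. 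The first step is to upgrade this to an a priori bound: comparing the two sides of \eqref{NonlinearScheme} at $t=\kappa^{-1/2}\lambda+C\lambda^{1/2}$ for a suitable constant $C$ should give $0<t^{p}_\infty-\kappa^{-1/2}\lambda\le C\lambda^{1/2}$, which is the localization needed to justify the expansions below.

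Next I would substitute $t=\kappa^{-1/2}\lambda+w$ with $w=O(\lambda^{1/2})$ into $-\log$ of \eqref{NonlinearScheme}. Using $(\sqrt{\kappa}t-\lambda)^{2}=\kappa w^{2}$, $t^{3/2}=\kappa^{-3/4}\lambda^{3/2}(1+\kappa^{1/2}w/\lambda)^{3/2}$, and the definition \eqref{defalpha} of $\alpha_\lambda$ (so that $-\log[\pi^{1/2}\ell^{-1}\kappa^{-3/4}\lambda^{1/2}]=\alpha_\lambda^{2}$), the equation becomes
\begin{equation*}
\frac{\kappa w^{2}}{\kappa^{-1/2}\lambda+w}
=\alpha_\lambda^{2}
-\tfrac{3}{2}\log\!\Big(1+\tfrac{\kappa^{1/2}w}{\lambda}\Big)
-2\log\!\Big(1+\tfrac{\beta vD\,w}{x_{c_3}+\beta vD\kappa^{-1/2}\lambda}\Big).
\end{equation*}
Since $x_{c_3}\sim\sqrt{vD}\,\lambda$ (as in the proof of Proposition \ref{ellzero}) and $w=O(\lambda^{1/2})$, the two logarithms on the right are $O(\lambda^{-1/2})$, and the left-hand side equals $\kappa^{3/2}w^{2}/\lambda+O(\lambda^{-1/2})$; hence the dominant balance $\kappa^{3/2}w^{2}/\lambda=\alpha_\lambda^{2}(1+o(1))$ forces $w=\kappa^{-3/4}\alpha_\lambda\lambda^{1/2}(1+o(1))$ for the positive root. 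Feeding $w=\kappa^{-3/4}\alpha_\lambda\lambda^{1/2}+\delta$ back in and expanding each side one order further produces a nearly linear equation for $\delta$ with leading coefficient $\approx 2\kappa^{3/4}\alpha_\lambda\lambda^{-1/2}$, which pins down $\delta$ and yields \eqref{asympofpeak2}. The $x_{c_3}$-dependent factor in \eqref{NonlinearScheme} enters only this last step and only as a lower-order correction; alternatively it can be treated by a separate perturbation off the solution of the factor-free equation $e^{-(\sqrt{\kappa}t-\lambda)^{2}/t}=\pi^{1/2}\ell^{-1}\lambda^{-1}t^{3/2}$ (the $\kappa$-analogue of \eqref{previousScheme}), mirroring the Proposition-plus-perturbation structure of the proof of Theorem \ref{thm_lzero}.

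The main obstacle is the bookkeeping of the logarithmic remainders together with the a priori localization of the root. One must (i) rule out both $w\ll\lambda^{1/2}$ and $w\gg\lambda^{1/2}$ before the dominant balance is legitimate -- this is where the monotonicity of the two sides of \eqref{NonlinearScheme} and the hypotheses \eqref{ellcondition1}, \eqref{ellcondition2} are used -- and (ii) keep control of the fact that, under \eqref{ellcondition2}, the lifetime $\ell$ is itself at least of order $\lambda^{1/2}$, so $\kappa=k-\ell^{-1}$, $\alpha_\lambda$, and the constant implicit in $x_{c_3}\sim\sqrt{vD}\,\lambda$ all depend on $\lambda$; one should check that the $O(\lambda^{-1/2})$ estimates hold with constants that are uniform in this regime (or depend only on $\alpha_\lambda$, which is finite by \eqref{defalpha}). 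It is also worth verifying at the refinement step whether an explicit third term of size $\sim\alpha_\lambda^{2}\kappa^{-1}$ should be recorded in \eqref{asympofpeak2}, consistently with the three-term expansion of $\lambda(t)$ announced in \eqref{expanforlambda}, the genuinely $O(\lambda^{-1/2})$ remainder coming only after that term.
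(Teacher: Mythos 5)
Your proposal follows essentially the same route as the paper's own proof: substitute $t=(k-\ell^{-1})^{-1/2}\lambda+\varepsilon$ into \eqref{NonlinearScheme}, treat the $x_{c_3}$-dependent factor as $1+O(\lambda^{-1}\varepsilon)$, take logarithms, and solve the dominant balance $(k-\ell^{-1})^{3/2}\varepsilon^{2}/\lambda=\alpha_\lambda^{2}+\cdots$ to obtain $\varepsilon=(k-\ell^{-1})^{-3/4}\alpha_\lambda\lambda^{1/2}+\cdots$. Your additional steps (the a priori localization of the root by monotonicity, the refinement for $\delta$, and the remark that a constant-size third term $\sim\alpha_\lambda^{2}(k-\ell^{-1})^{-1}$ may need to be recorded before the remainder is genuinely $O\left(\lambda^{-\frac{1}{2}}\right)$) are a more careful version of, not a departure from, the paper's bookkeeping.
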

\begin{proof}
Set
\begin{equation}\label{peaktimeapp}
t = (k-\ell^{-1})^{-\frac{1}{2}} \lambda + \varepsilon, \quad \mbox{where} \quad \varepsilon = o (\lambda) \quad \mbox{as} \quad \lambda \gg 1.
\end{equation}
Since
$$
\left(\frac{x_{c_3}+\beta vDt }{x_{c_3}+\beta vD \lambda (k-\ell^{-1})^{-\frac{1}{2}}}\right)^2 = 1 + O\left( \lambda^{-1} \varepsilon \right),
$$
we obtain from \eqref{NonlinearScheme} that
\begin{equation*}
\begin{split}
e^{-\frac{(k-\ell^{-1})\varepsilon^2}{(k-\ell^{-1})^{-1/2} \lambda + \varepsilon}} &=
\pi^{\frac{1}{2}} \ell^{-1} \lambda^{-1} t^{\frac{3}{2}} \left[ 1+
O\left( \lambda^{-1} \varepsilon \right) \right] \\
 &=\pi^{\frac{1}{2}} \ell^{-1} \lambda^{-1}   \left( [(k-\ell^{-1})^{-\frac{1}{2}} \lambda ]^{\frac{3}{2}} + O\left( \lambda^\frac{1}{2} \right) \right)  \left( 1+
O\left( \lambda^{-1} \varepsilon \right) \right) \\
&= \alpha \lambda^\frac{1}{2} \left[ 1 + O\left( \lambda^{-1} \right) \right] \quad \mbox{with} \quad 
\alpha:= \pi^{\frac{1}{2}} \ell^{-1} (k-\ell^{-1})^{-\frac{3}{4}}
\end{split}
\end{equation*}
as $\lambda \gg 1$.
By
\begin{equation*}
\begin{split}
-\frac{(k-\ell^{-1})\varepsilon^2}{(k-\ell^{-1})^{-1/2} \lambda + \varepsilon} = -(k-\ell^{-1})^\frac{3}{2} \varepsilon^2 \left[ \lambda^{-1} + O\left( \lambda^{-2}\varepsilon \right) \right] \quad \mbox{as} \quad \lambda \gg 1,
\end{split}
\end{equation*}
and
\begin{equation*}
\begin{split}
\log\left( \alpha \lambda^\frac{1}{2} \left[ 1 + O\left( \lambda^{-1} \right) \right]\right) = \log\left(\alpha \lambda^\frac{1}{2} \right) +  O\left( \lambda^{-1} \right)
\quad \mbox{as} \quad \lambda \gg 1,
\end{split}
\end{equation*}
we obtain 
\begin{equation}
\varepsilon^2 = (k-\ell^{-1})^{-\frac{3}{2}} \left( -\log\left[ \alpha \lambda^\frac{1}{2} \right] \right) \lambda + O\left( 1 \right) \quad \mbox{as} \quad \lambda \gg 1.
\end{equation}
Since $0<\alpha \lambda^{1/2} <1$, we have 
$$
\varepsilon =  (k-\ell^{-1})^{-\frac{3}{4}} \left( -\log\left[ \alpha \lambda^\frac{1}{2} \right] \right)^\frac{1}{2} \lambda^\frac{1}{2} + O\left( \lambda^{-\frac{1}{2}} \right) \quad \mbox{as} \quad \lambda \gg 1.
$$
Then we have the asymptotic expansion of the approximate peak time
\begin{equation}\label{eq_peaktime}
\begin{split}
t &= (k-\ell^{-1})^{-\frac{1}{2}} \lambda + \varepsilon \\
&=(k-\ell^{-1})^{-\frac{1}{2}} \lambda + (k-\ell^{-1})^{-\frac{3}{4}} \left( -\log\left[ (\pi^{\frac{1}{2}} \ell^{-1} (k-\ell^{-1})^{-\frac{3}{4}}) \lambda^\frac{1}{2} \right] \right)^\frac{1}{2} \lambda^\frac{1}{2} 
\end{split}
\end{equation}
as $\lambda \gg 1$, and the proof the Theorem \ref{thm_linfty} is complete.
\end{proof}

\section{Numerical verification}\label{sec_numeric}
In this section,  we numerically verify the accuracy of the asymptotic of the approximate peak time by comparing to the peak time for the cases $\ell \ll 1$ and $\ell \gg 1$. If no otherwise specified, we set  S-D pair $\{x_{d},\,x_{s}\}=\{(14,\,10,\,0),\,(6,\,10,\,0)\}$, the target location $x_{c}=(10,\,10,\,x_{c_3})$ with $x_{c_3}=20\,{\rm mm}$ and 
\begin{equation}\label{phys_para}
v=0.219\, {\rm mm/ps},\quad D=1/3\,{\rm mm},\quad \mu_a=0.1\,{\rm mm^{-1}},\quad \beta=0.5493\, {\rm mm^{-1}},
\end{equation}
which are typical values in biological tissues. 
By \eqref{asympofpeak1}, define $t_{peak}^s$ as the asymptotic profile of the approximate peak time for small fluorescence lifetime 
\begin{equation}\label{asymptotic1}
\begin{split}
t_{peak}^s :=  k^{-\frac{1}{2}}\lambda - \frac{7}{4}k^{-1} + \frac{\sqrt{k}}{\sqrt{k}+\beta\sqrt{vD}}k^{-1} + \ell,
\end{split}
\end{equation}
where $k>0$ and $\lambda>0$ are is as in \eqref{defk}.
In Figure \ref{fig_tpszero_diffpara}, we can see the behavior of peak time $t_{peak}$ and $t_{peak}^s$ with their relative errors depending on $\ell>0$, $\mu_a>0$ and $x_{c_3}>0$.

\begin{figure}[htp]
\centering
\begin{tabular}{lll}
(a) & (b) & (c) \\
\includegraphics[width=0.33\textwidth]{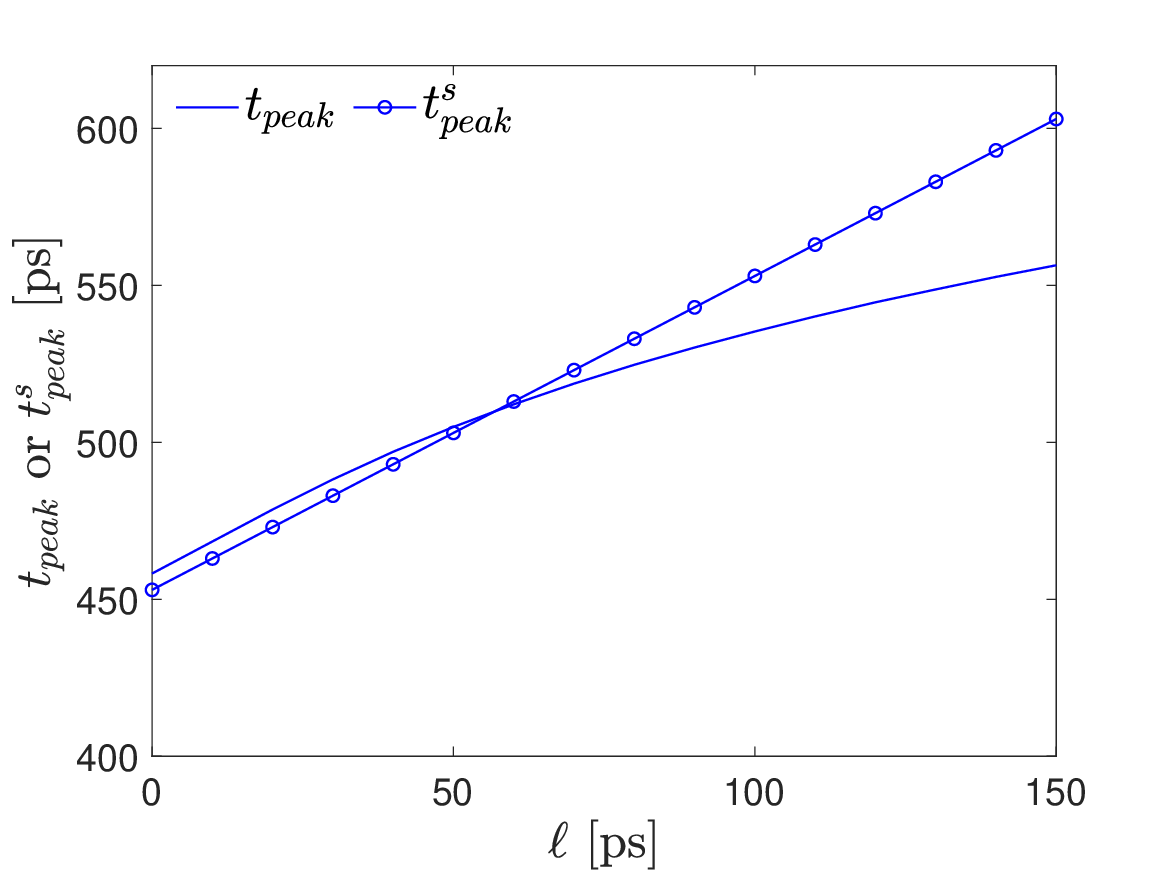}
& \includegraphics[width=0.33\textwidth]{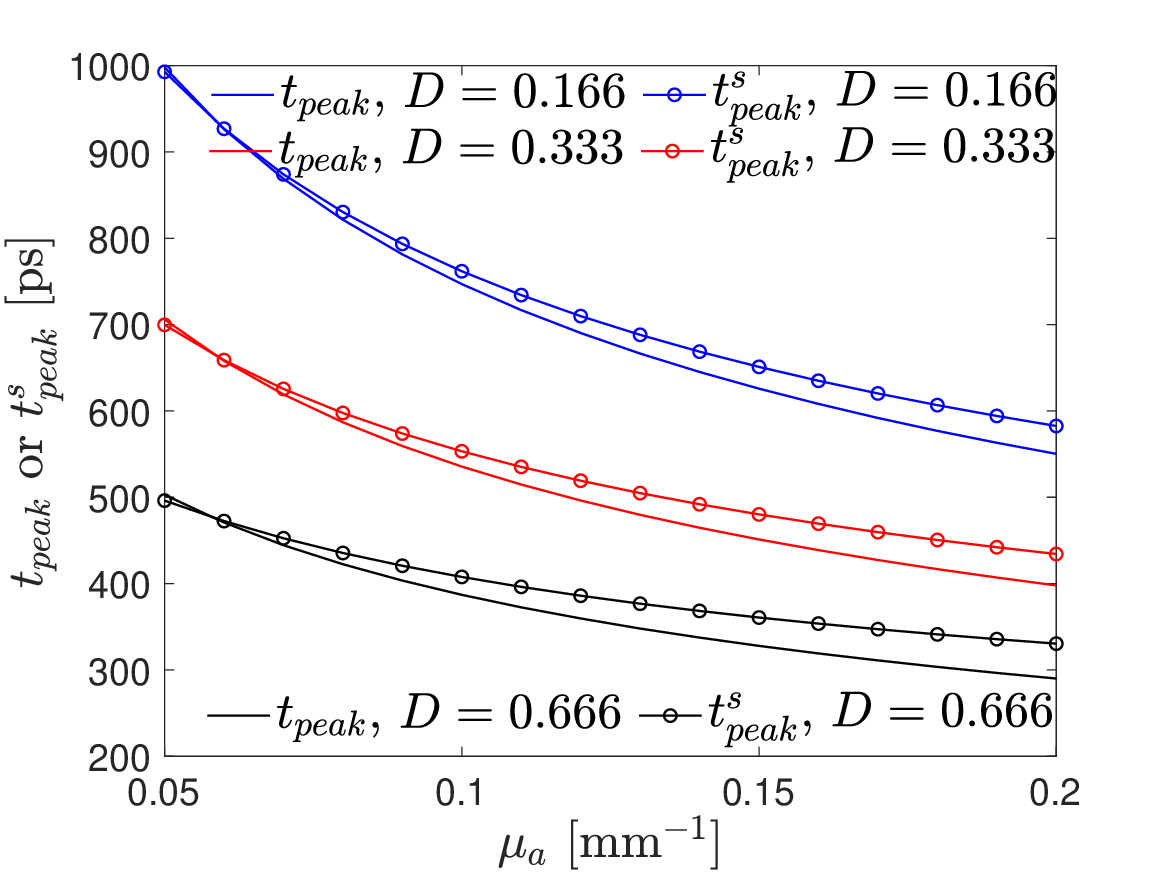}
& \includegraphics[width=0.33\textwidth]{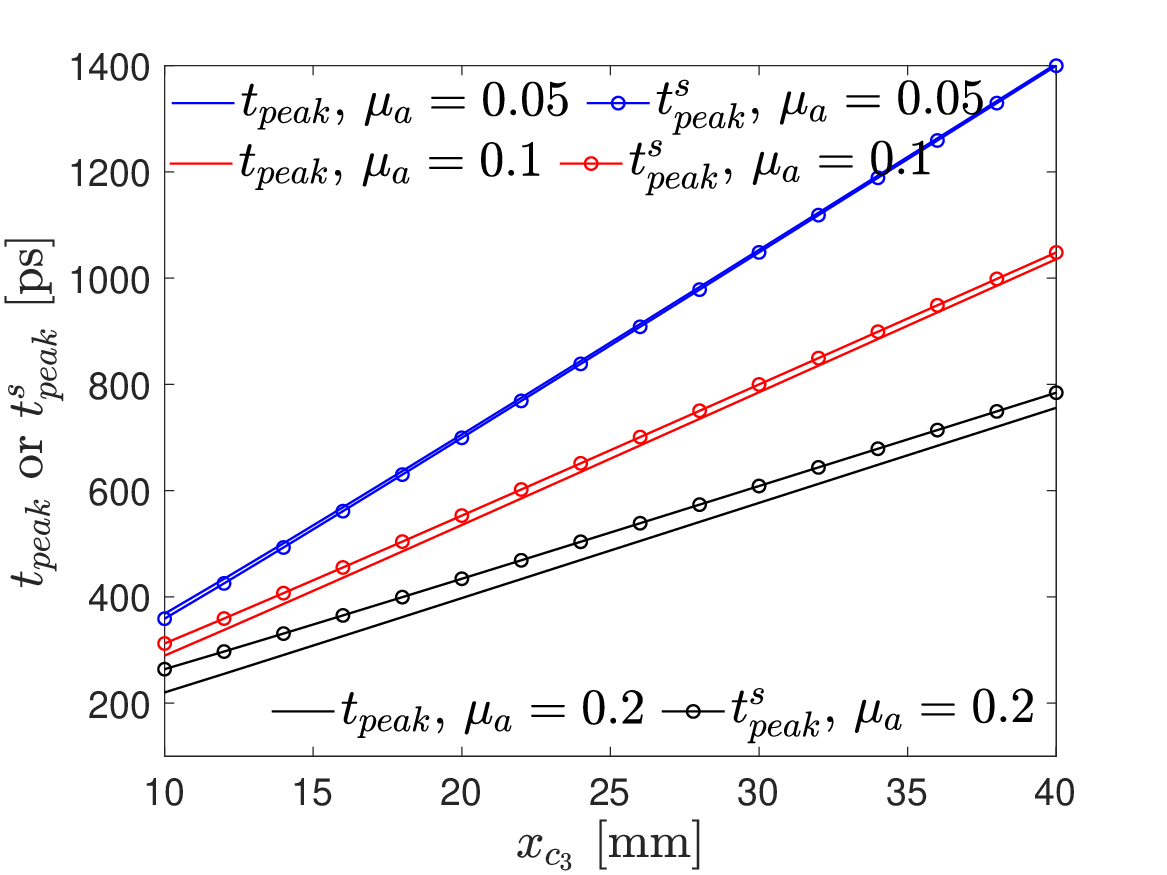}
\end{tabular}
\caption{Peak time $t_{peak}$ and the asymptotic $t_{peak}^s$. For (b) and (c), we set $\ell = 100 \; {\rm ps}$.}
\label{fig_tpszero_diffpara}
\end{figure}

In Figure \ref{fig_tps_diffpara}, we can see the behavior of peak time $t_{peak}$ and $t_{peak}^{l_0}$ which is the positive solution to
\begin{equation*}
\begin{split}
 e^{-\frac{(\sqrt{k-\ell^{-1}}t-\lambda)^2}{t}} = \pi^{\frac{1}{2}} \ell^{-1} \lambda^{-1} t^{\frac{3}{2}} \left(\frac{x_{c_3}+\beta vDt }{x_{c_3}+\beta vD \lambda (k-\ell^{-1})^{-\frac{1}{2}}}\right)^2
\end{split}
\end{equation*}
depending on $l>0$, $\mu_a>0$ and $x_{c_3}>0$. By \eqref{asympofpeak2}, define $t_{peak}^l$ as the asymptotic profile of the approximate peak time for large fluorescence lifetime
\begin{equation}\label{asymptotic2}
\begin{split}
t_{peak}^l = (k-\ell^{-1})^{-\frac{1}{2}} \lambda + (k-\ell^{-1})^{-\frac{3}{4}} \alpha_\lambda \lambda^\frac{1}{2},
\end{split}
\end{equation}
where $\alpha_\lambda>0$ is as in \eqref{defalpha}.
In Figure \ref{fig_tps_linear_diffpara}, we can see the behavior of peak time $t_{peak}$ and $t_{peak}^{l}$ depending on $l>0$, $\mu_a>0$ and $x_{c_3}>0$.

\begin{figure}[htp]
\centering
\begin{tabular}{lll}
(a) & (b) & (c) \\
\includegraphics[width=0.33\textwidth]{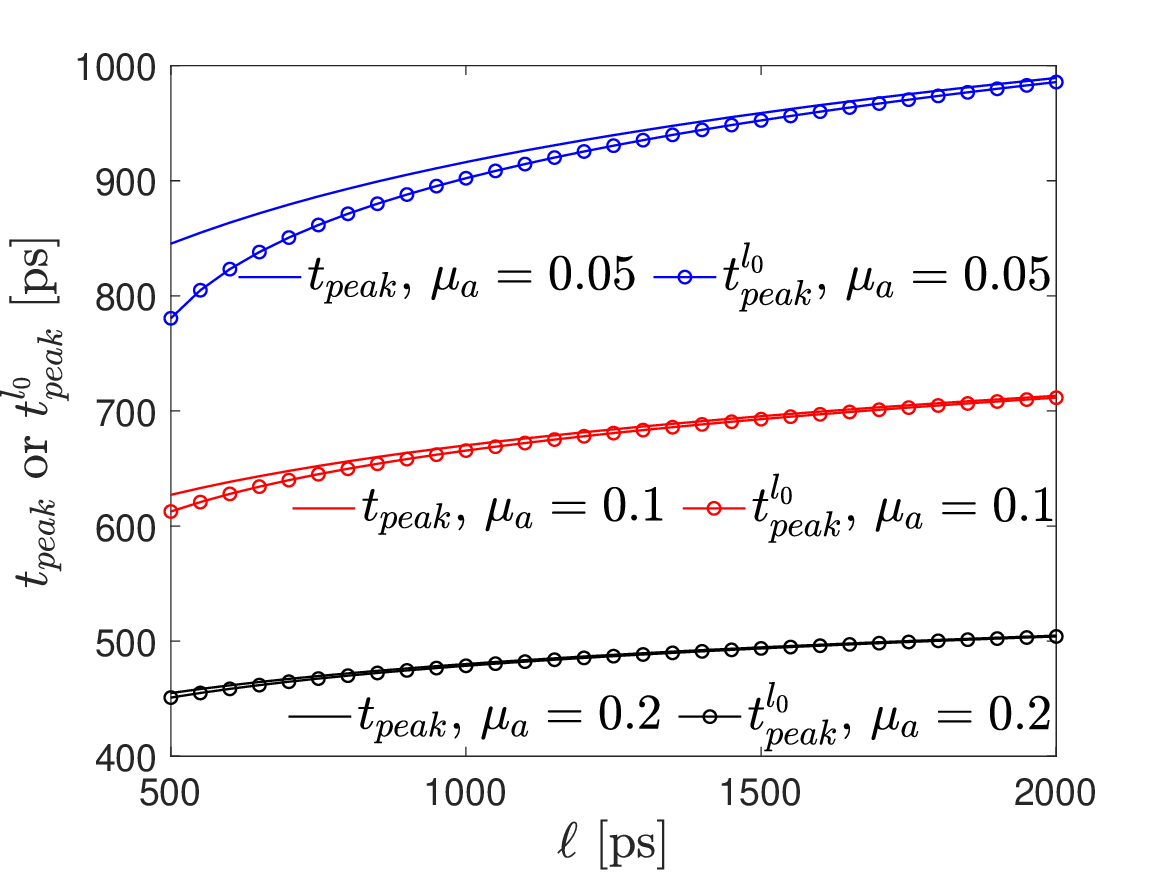}
& \includegraphics[width=0.33\textwidth]{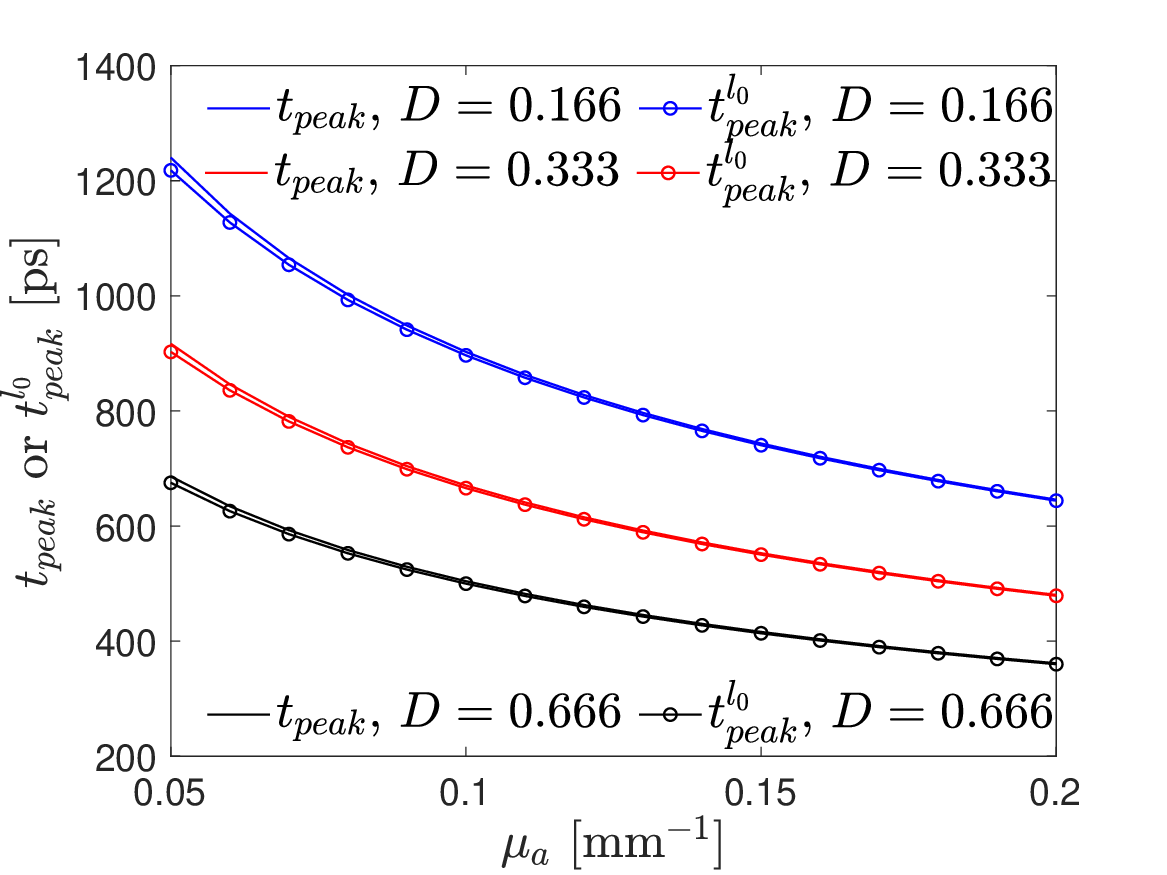}
& \includegraphics[width=0.33\textwidth]{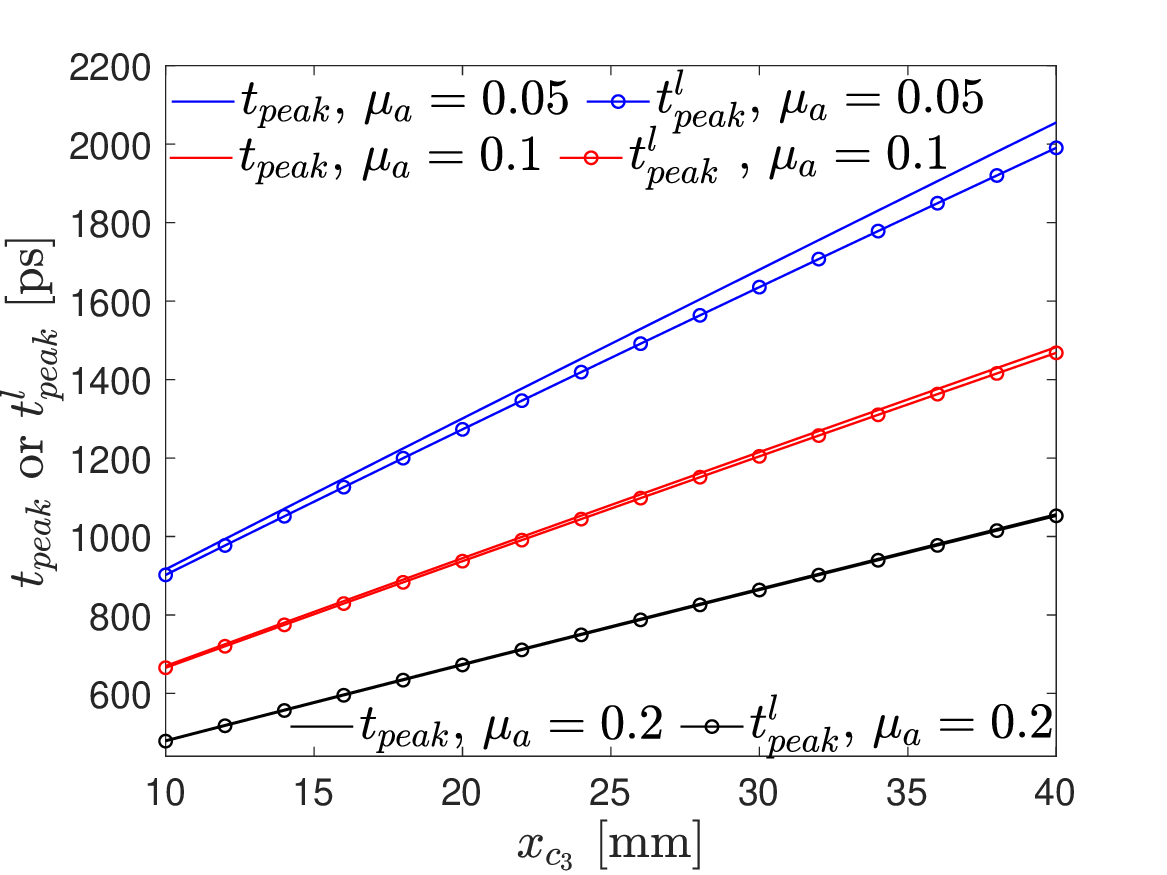}
\end{tabular}
\caption{Peak time $t_{peak}$ and the approximate peak time $t_{peak}^{l_0}$. For (b) and (c), we set $\ell = 1000 \; {\rm ps}$.}
\label{fig_tps_diffpara}
\end{figure}

\begin{figure}[htp]
\centering
\begin{tabular}{lll}
(a) & (b) & (c) \\
\includegraphics[width=0.33\textwidth]{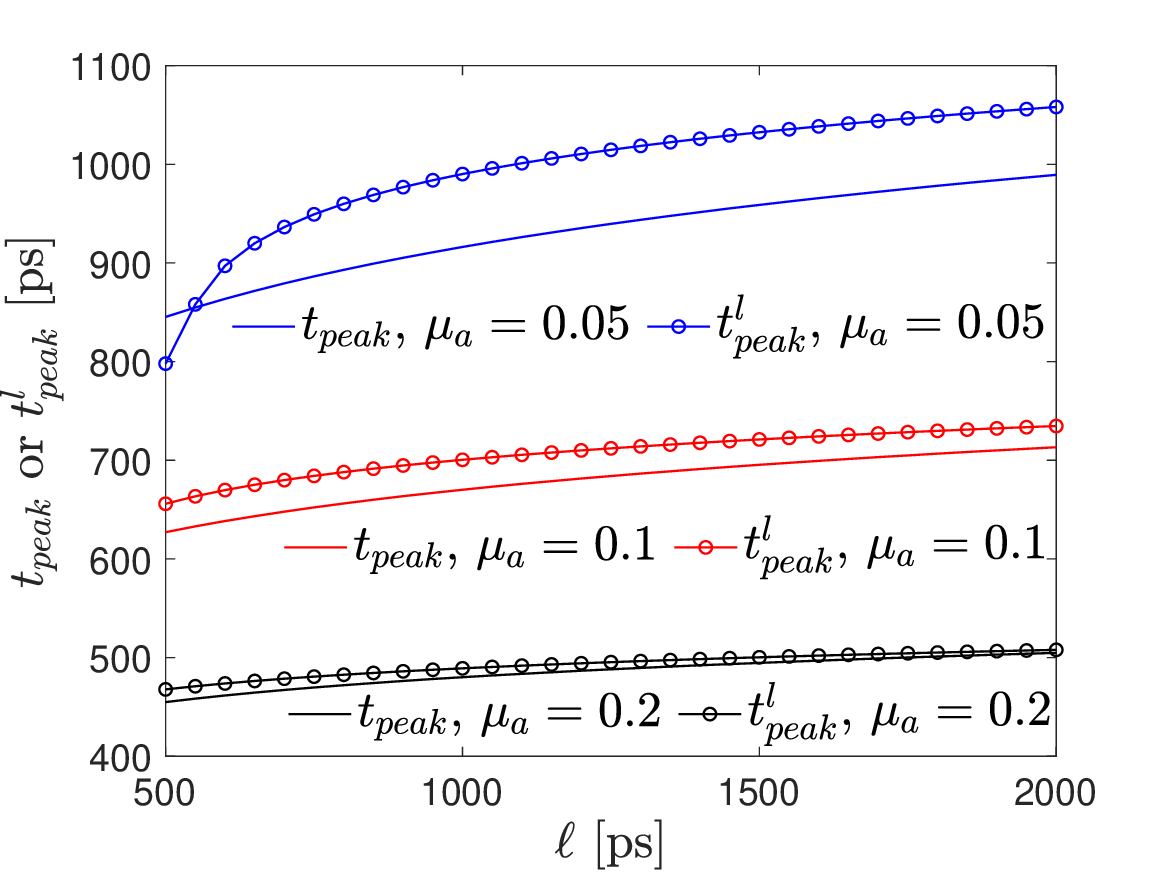}
& \includegraphics[width=0.33\textwidth]{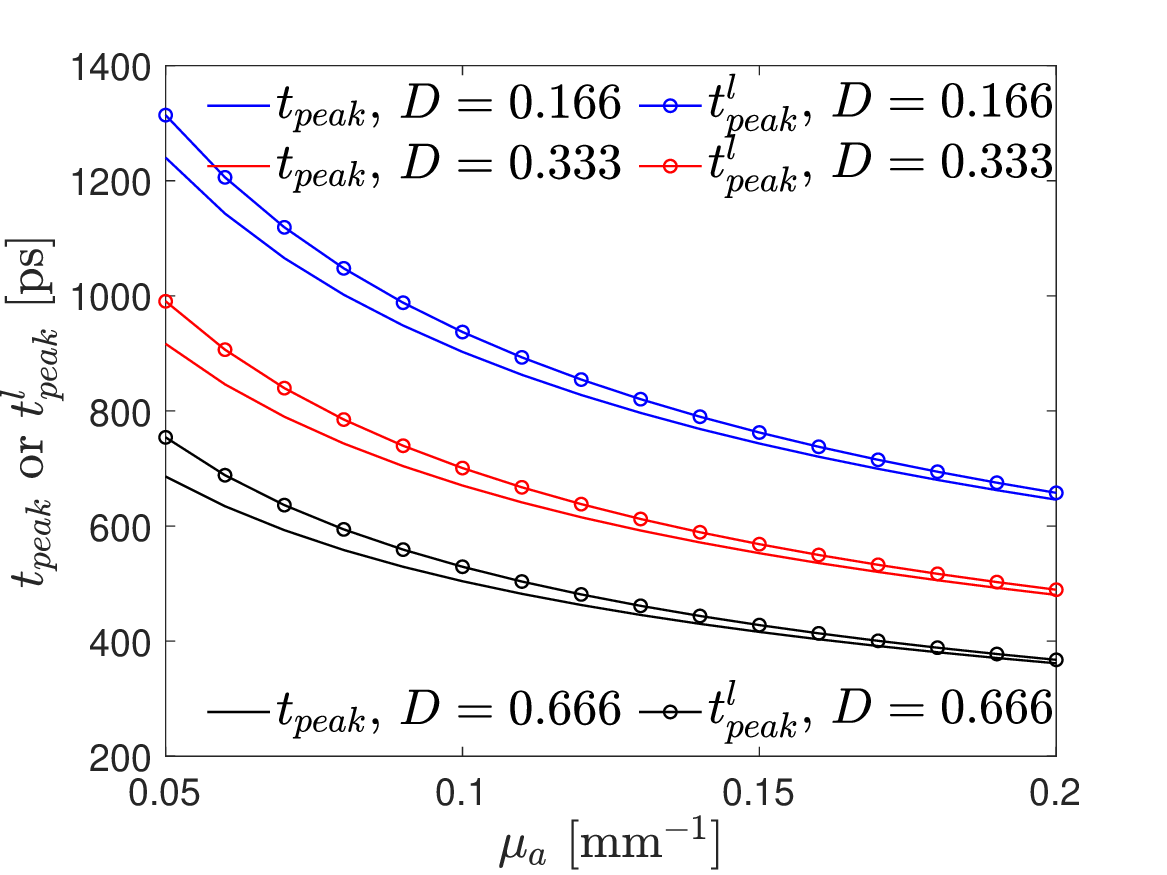}
& \includegraphics[width=0.33\textwidth]{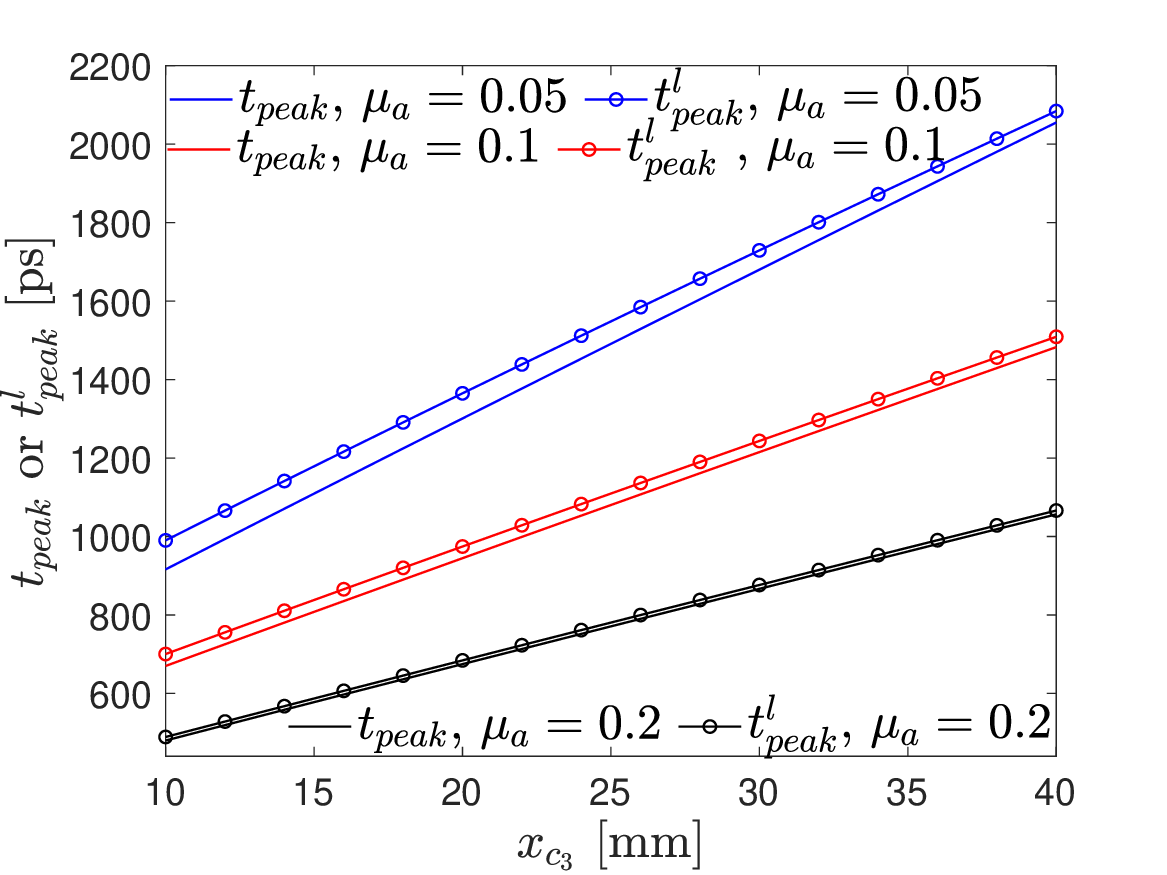}
\end{tabular}
\caption{Peak time $t_{peak}$ and the approximate peak time $t_{peak}^{l}$. For (b) and (c), we set $\ell = 1000 \; {\rm ps}$.}
\label{fig_tps_linear_diffpara} 
\end{figure}

\section{Inversion scheme}\label{sec_inversion}
In this section, we propose a simple inversion scheme by using three S-D pairs. In each S-D pair $(x_s, x_d)$, we measure the peak time, and as a consequence, we determine the distance parameter $\lambda>0$ in \eqref{defk}, which characterizes the radius of sphere at the center $(x_s+x_d)/2$, where the target $x_c$ lies on the sphere. Our scheme aims to determine three unknowns in the point target by three radii from three different S-D pairs.

Based on the asymptotic expansions for the peak time in Theorems \ref{thm_lzero} and \ref{thm_linfty}, we derive the asymptotic expansions for the distance parameter $\lambda>0$ by inverting the relations between the peak time $t>0$ and $\lambda>0$. 
When $\ell \ll 1$, by \eqref{asympofpeak1}, one can easily obtain
\begin{equation}\label{lambda_expan1}
\lambda = k^\frac{1}{2}t + \frac{7}{4}k^{-\frac{1}{2}} - 
\frac{\sqrt{k}}{\sqrt{k} + \beta \sqrt{vD}}k^{-\frac{1}{2}}
-\ell k^\frac{1}{2} + O\left( t^{-1} \right) 
\end{equation}
as $t \gg 1$. When $\ell \gg 1$, by \eqref{asympofpeak2}, 
\begin{equation*}
\begin{split}
\lambda &= (k-\ell^{-1})^\frac{1}{2} t -  (k-\ell^{-1})^{-\frac{1}{4}} \alpha_\lambda \lambda^\frac{1}{2} + O\left( t^{-\frac{1}{2}} \right) \\
&= (k-\ell^{-1})^\frac{1}{2} t -  (k-\ell^{-1})^{-\frac{1}{4}}\alpha_\lambda\left[ (k-\ell^{-1})^\frac{1}{2} t -  (k-\ell^{-1})^{-\frac{1}{4}} \alpha_\lambda \lambda^\frac{1}{2} + O\left( t^{-\frac{1}{2}} \right) \right]^\frac{1}{2}  +O\left( t^{-\frac{1}{2}} \right) \\
&=(k-\ell^{-1})^\frac{1}{2} t -  (k-\ell^{-1})^{-\frac{1}{4}}\alpha_\lambda\left[ (k-\ell^{-1})^\frac{1}{4} t^\frac{1}{2} - \frac{\alpha_\lambda}{2}(k-\ell)^{-\frac{1}{2}} t^{-\frac{1}{2}} \lambda^\frac{1}{2} + O\left( t^{-\frac{1}{2}} \right)
\right] + O\left( t^{-\frac{1}{2}} \right) \\
&=(k-\ell^{-1})^\frac{1}{2}t - \alpha_\lambda t^\frac{1}{2} + \frac{\alpha_\lambda^2}{2} (k-\ell^{-1})^{-\frac{1}{2}}+ O\left( t^{-\frac{1}{2}} \right)
\end{split}
\end{equation*}
as $t \gg 1$. Since
\begin{equation*}
\begin{split}
\alpha_\lambda &= \left(-\log\left[ (\pi^{\frac{1}{2}} \ell^{-1} (k-\ell^{-1})^{-\frac{3}{4}}) \lambda^\frac{1}{2} \right] \right)^\frac{1}{2} =\left(-\log\left[ (\pi^{\frac{1}{2}} \ell^{-1} (k-\ell^{-1})^{-\frac{1}{2}}) (t^\frac{1}{2} + O(1)) \right] \right)^\frac{1}{2}  \\
&= \tilde \alpha_t +  O\left( t^{-\frac{1}{2}} \right)\quad \mbox{with} \quad \tilde \alpha_t:=
 \left(-\log\left[ (\pi^{\frac{1}{2}} \ell^{-1} (k-\ell^{-1})^{-\frac{1}{2}}) t^\frac{1}{2} \right] \right)^\frac{1}{2} < \infty
\end{split}
\end{equation*}
as $t \gg 1$, we obtain
\begin{equation}\label{lambda_expan2}
\begin{split}
\lambda = (k-\ell^{-1})^\frac{1}{2}t - \tilde \alpha_t t^\frac{1}{2} + \frac{\tilde\alpha_t^2}{2} (k-\ell^{-1})^{-\frac{1}{2}} + O\left( t^{-\frac{1}{2}} \right)
\end{split}
\end{equation}
as $t \gg 1$.
By \eqref{lambda_expan1} and \eqref{lambda_expan2}, we calculate
the distance $\lambda>0$ from the peak time $t>0$
\begin{equation}\label{lambdaexpansion}
\lambda(t) = \left\{
\begin{array}{ll}
k^\frac{1}{2}t + \frac{7}{4}k^{-\frac{1}{2}} - 
\frac{\sqrt{k}}{\sqrt{k} + \beta \sqrt{vD}}k^{-\frac{1}{2}}
-\ell k^\frac{1}{2} &  \mbox{if} \quad \ell \ll 1, \vspace{5pt} \\ 
(k-\ell^{-1})^\frac{1}{2}t - \tilde \alpha_t t^\frac{1}{2} + \frac{\tilde \alpha_t^2}{2} (k-\ell^{-1})^{-\frac{1}{2}}& 
\mbox{if} \quad \ell > \pi^{\frac{1}{2}} (k-\ell^{-1})^{-\frac{1}{2}} t^\frac{1}{2},
\end{array}
\right.
\end{equation}
where
\begin{equation}\label{defalphatilde}
\tilde \alpha_t:=
 \left(-\log\left[ \pi^{\frac{1}{2}} \ell^{-1} (k-\ell^{-1})^{-\frac{1}{2}} t^\frac{1}{2} \right] \right)^\frac{1}{2}.
\end{equation}
By \eqref{defk}, we can see that the target $x_c$ is on the sphere 
\begin{equation}
\begin{split}
\left|x_c - \frac{x_d + x_s}{2}\right|^2 
= r^2 \quad
\mbox{with} \quad 
r := \sqrt{vD\lambda^2
- \frac{|x_d - x_s|^2}{4}}. 
\end{split}
\end{equation}
Given S-D pair $(x_s, x_d)$ , set the radius $r>0$ from the distance $\lambda$
\begin{equation}\label{def_radius}
\begin{split}
r(\lambda) = \sqrt{vD\lambda^2 - \frac{|x_d - x_s|^2}{4}}. 
\end{split}
\end{equation}
Set the initial S-D pair $(x_s, x_d)$ and obtain the radius $r>0$ from the equations \eqref{lambdaexpansion} and \eqref{def_radius}. As the second S-D pair, we move the initial S-D pair on the disk of radius $r>0$ with the center $(x_s+x_d)/2$ in the direction of $\theta_1$ and obtain the radius $r_1>0$. As the last S-D pair, we move the initial S-D pair on the same disk as before in the direction of $\theta_2$, where $\theta_2 \bot \theta_1$, and obtain the radius $r_2>0$. Then we obtain one tetrahedral with all known edges. 
By translating $(x_s+x_d)/2$ to the origin, we obtain the tetrahedral, which has the four vertices
\begin{equation}
\begin{split}
&O(0, 0, 0), \quad A(r \cos\theta_1,r \sin\theta_1,0 ), \quad
B(r \cos\theta_2,r \sin\theta_2,0 ), \quad
C(c_1,c_2,c_3) : \mbox{target location} \\
&\mbox{with} \quad \overline{OC}=r>0, \quad \overline{AC}=r_1>0, 
\quad \overline{BC}=r_2>0.
\end{split}
\end{equation}
See Figure \ref{fig_tetrahedron} for the tetrahedron.
Say $H_1$ and $H_2$ projection points on the lines 
$\overline{OA}$ and $\overline{OB}$ from the target vertex $C$, respectively. 

\begin{figure}[ht]
\centering
\includegraphics[width=.4\textwidth,height=0.25\textheight]{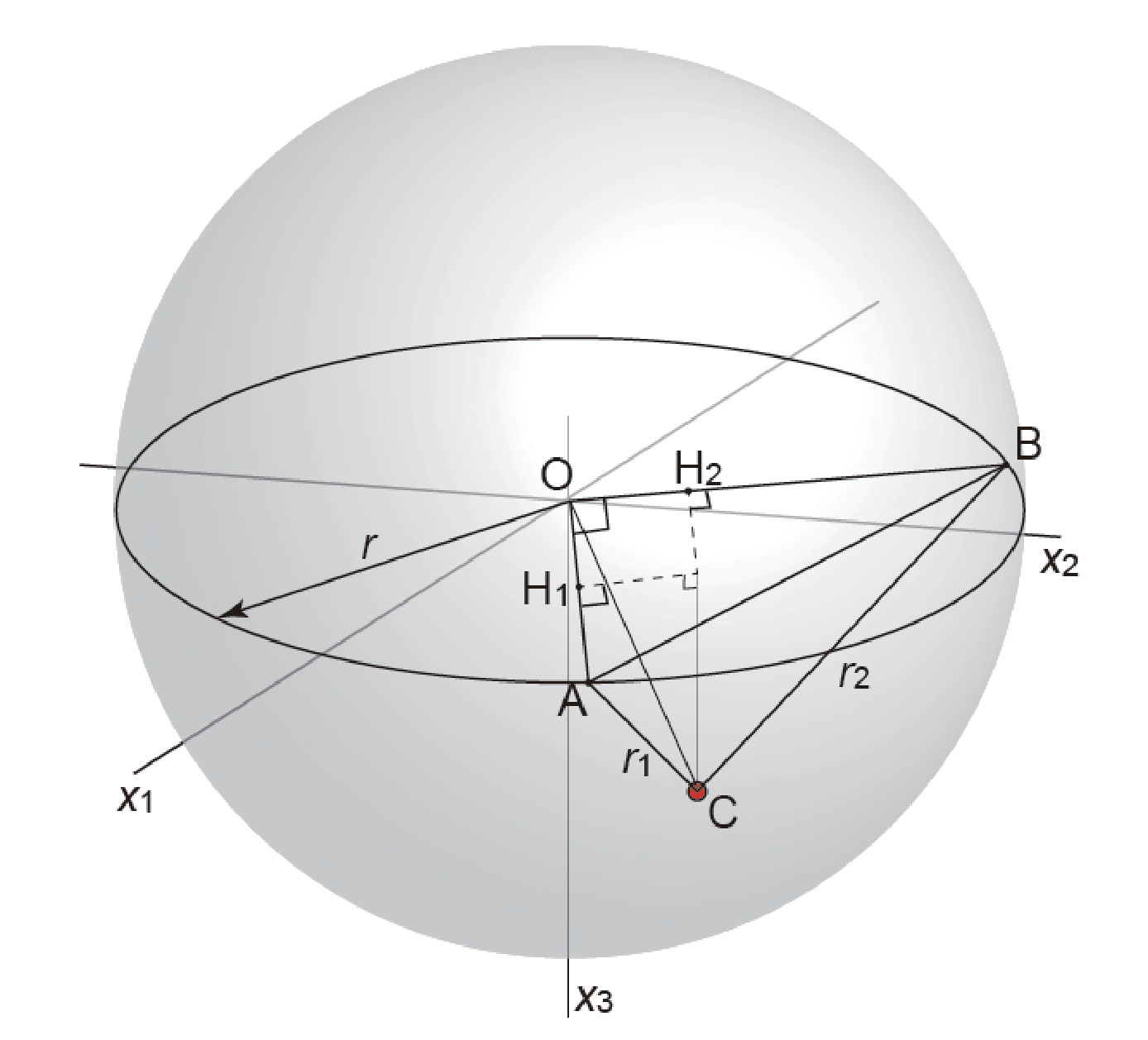}
\caption{Tetrahedron with four vertices $O,\;A,\;B$ and $C$. Point $A$ is the center point of the initial S-D pair. Points $B$ and $C$ are the center points of the second and third S-D pairs, respectively. Point $C$ is the target location.} 
\label{fig_tetrahedron}
\end{figure}

Write the $x,y$-coordinates of the vertex $C$ as
$$
c_1 = x \cos\theta_1 +  y \cos\theta_2, \;
c_2 = x \sin\theta_1 +  y \sin\theta_2 \quad \mbox{with}
\quad x = \overline{OH}_1, \; y = \overline{OH}_2.
$$
By the three-perpendiculars theorem, we obtain the relations
\begin{equation}
\begin{split}
&x^2 + \overline{CH}_1^2 = r^2,  \quad \overline{CH}_1 = \frac{r_1}{r} \sqrt{r^2 - \frac{1}{4}r_1^2}, \\
&y^2 + \overline{CH}_2^2 = r^2,  \quad \overline{CH}_2 = \frac{r_2}{r} \sqrt{r^2 - \frac{1}{4}r_2^2}, \\
&x^2 + c_3^2 = \overline{CH}_2^2 \quad \mbox{and} \quad y^2 + c_3^2 = \overline{CH}_1^2. 
\end{split}
\end{equation}
Then we have 
\begin{equation}
\begin{split}
&x = \frac{1}{2r} (2r^2-r_1^2), \quad y = \frac{1}{2r} (2r^2-r_2^2), \quad
c_3 = \sqrt{r_1^2+r_2^2-\frac{r_1^4+r_2^4}{4r^2}-r^2},
\end{split}
\end{equation}
and the target $C$ is determined by 
\begin{equation}\label{ex_form_recover}
\begin{split}
 c_1 &= \left( \frac{x_s+x_d}{2} \right)_x
+ \frac{1}{2r} (2r^2-r_1^2)\cos\theta_1 +
 \frac{1}{2r} (2r^2-r_2^2)\cos\theta_2 , \\
c_2 &= \left( \frac{x_s+x_d}{2} \right)_y
+  \frac{1}{2r} (2r^2-r_1^2)\sin\theta_1 +
 \frac{1}{2r} (2r^2-r_2^2)\sin\theta_2 , \\
c_3 &= \sqrt{r_1^2+r_2^2-\frac{r_1^4+r_2^4}{4r^2}-r^2}.
\end{split}
\end{equation}

\section{Numerical experiments}\label{sec_numericexp}

In this section, we apply the proposed reconstruction algorithm \eqref{ex_form_recover} to several numerical experiments. Since the accuracy of the peak time equations \eqref{asympofpeak1} and \eqref{asympofpeak2} has been numerically verified for different physical parameters in Section \ref{sec_numeric}, we will focus on showing the effectiveness of the reconstruction algorithm from the perspective of noisy measurements and different initial S-D pair.
In all experiments, we set the physical parameters as \eqref{phys_para}.  

Considering noise, specifically time jitters, contained in the measurements, we perturb $t_{peak}$ by
\begin{equation}\label{noisydata_add}
t_{peak}^\delta:=\left(1+\hat\delta\times(2\times {\rm rand(1)} -1)\right)\times t_{peak},
\end{equation}
where $\hat\delta$ is the relative noise level, and $\rm rand(1)$ generates a uniformly distributed random number on the interval $(0,\,1)$.
We compute the relative error of the reconstruction by the following formula:
\begin{equation}\label{equ_relerr_xc}
RelErr=\frac{|x_c-x_c^{inv}|}{|x_c|},
\end{equation}
where $x_c$ and $x_c^{inv}$ are the actual and reconstructed location of the target, respectively.

In the following, we consider the inverse problem for the cases $\ell\ll 1$ and $\ell\gg 1$, respectively. Without loss of generality, let us simplify the reconstruction algorithm by setting $\theta_1=0$ and $\theta_2=\pi/2$ in \eqref{ex_form_recover}.

\begin{example}
    Let $\ell=100 \; {\rm ps}$. Supposing that  $x_c=(8,\,7,\,20)$ and $x_c=(8,\,7,\,30)$, we show the results of numerical reconstructions from noise-free measurements for different initial S-D pairs in Table \ref{tbl:ex_small_lt01}. The results of the measurements containing different noise levels are shown in Table \ref{tbl:ex_small_lt02}. 
\end{example}

\begin{table}[htp]
\centering
\caption{Reconstructions from noise-free measurements for different initial S-D pairs ($\ell=100 \; {\rm ps}$)}
\label{tbl:ex_small_lt01}
\begin{tabular}{cccc}
\hline
$x_c$& $\{x_d,\,x_s\}$ & $x_c^{inv}$  &  $RelErr$    \\ \hline
  \multirow{4}*{(8,\,7,\,20)} &\{(14,\,10,\,0),\,(6,\,10,\,0)\}& (8.58,\,7.40,\,20.18) & 3.32e-02 \\
                           & \{(8,\,5,\,0),\,(0,\,5,\,0)\}&(8.34,\,7.26,\,19.96)  & 1.92e-02 \\
                           &\{(5,\,0,\,0),\,(5,\,8,\,0)\}&(8.24,\,7.39,\,19.97)  & 2.01e-02 \\
                          &\{(16,\,15,\,0),\,(8,\,15,\,0)\}&(8.63,\,7.52,\,20.32)   & 3.88e-02  \\\hline
  \multirow{4}*{(8,\,7,\,30)}  &\{(14,\,10,\,0),\,(6,\,10,\,0)\}& (8.47,\,7.35,\,30.00) & 1.83e-02 \\
                           & \{(8,\,5,\,0),\,(0,\,5,\,0)\}&(8.33,\,7.26,\,29.87)  & 1.38e-02 \\
                           &\{(5,\,0,\,0),\,(5,\,8,\,0)\}&(8.24,\,7.35,\,29.88)  & 1.40e-02 \\
                          &\{(16,\,15,\,0),\,(8,\,15,\,0)\}&(8.50,\,7.43,\,30.09)   & 2.09e-02  \\\hline           
\end{tabular}
\end{table}

\begin{table}[htp]
\centering
\caption{Reconstructions from noisy measurements for different initial S-D pairs ($\ell=100 \; {\rm ps}$)}
\label{tbl:ex_small_lt02}
\begin{tabular}{cccc}
\hline
$x_c$& $\hat\delta$ & $x_c^{inv}$  &  $RelErr$    \\ \hline
  \multirow{3}*{(8,\,7,\,20)} &0.1\%& (8.62,\,7.42,\,20.20) & 3.43e-02 \\
                           & 1\%&(9.02,\,7.49,\,20.41)  & 5.32e-02 \\
                           &5\%&(10.73,\,7.84,\,21.23)  & 1.39e-01 \\\hline
\end{tabular}
\end{table}

Next, we turn to test the reconstruction algorithm for $\ell\gg 1$.  
\begin{example}
    Let $\ell=1000\; {\rm ps}$. Supposing that  $x_c=(8,\,7,\,20)$ and $x_c=(8,\,7,\,30)$, we show the results of numerical reconstructions from noise-free measurements for different initial S-D pairs in Table \ref{tbl:ex_large_lt01}. The results of the measurements containing different noise levels are shown in Table \ref{tbl:ex_large_lt02}. 
\end{example}

\begin{table}[htp]
\centering
\caption{Reconstructions from noise-free measurements for different initial S-D pairs ($\ell=1000 \; {\rm ps}$)}
\label{tbl:ex_large_lt01}
\begin{tabular}{cccc}
\hline
$x_c$& $\{x_d,\,x_s\}$ & $x_c^{inv}$  &  $RelErr$    \\ \hline
  \multirow{4}*{(8,\,7,\,20)} &\{(14,\,10,\,0),\,(6,\,10,\,0)\}& (8.76,\,7.60,\,20.28) & 4.47e-02 \\
                           & \{(8,\,5,\,0),\,(0,\,5,\,0)\}&(8.45,\,7.40,\,19.98)  & 2.65e-02 \\
                           &\{(5,\,0,\,0),\,(5,\,8,\,0)\}&(8.36,\,7.50,\,19.99)  & 2.73e-02 \\
                          &\{(16,\,15,\,0),\,(8,\,15,\,0)\}&(8.82,\,7.76,\,20.47)   & 5.34e-02  \\\hline
  \multirow{4}*{(8,\,7,\,30)}  &\{(14,\,10,\,0),\,(6,\,10,\,0)\}& (8.58,\,7.47,\,29.94) & 2.36e-02 \\
                           & \{(8,\,5,\,0),\,(0,\,5,\,0)\}&(8.42,\,7.36,\,29.77)  & 1.89e-02 \\
                           &\{(5,\,0,\,0),\,(5,\,8,\,0)\}&(8.34,\,7.45,\,29.78)  & 1.91e-02 \\
                          &\{(16,\,15,\,0),\,(8,\,15,\,0)\}&(8.62,\,7.55,\,30.05)   & 2.60e-02  \\\hline                      
\end{tabular}
\end{table}

From Table \ref{tbl:ex_large_lt01}, it is obvious that the algorithm is insensitive to the selection of the initial S-D pair and has a good reconstruction for a deeper unknown target. To show the robustness, we show the results to reconstruct $x_c=(10,\,10,\,30)$ by setting the initial S-D pair as $\{x_d,\,x_s\}=\{(14,\,10,\,0),\,(6,\,10,\,0)\}$ for different noise levels. 

\begin{table}[htp]
\centering
\caption{Reconstructions from noisy measurements for different initial S-D pairs ($\ell=1000\; {\rm ps}$)}
\label{tbl:ex_large_lt02}
\begin{tabular}{cccc}
\hline
$x_c$& $\hat\delta$ & $x_c^{inv}$  &  $RelErr$    \\ \hline
  \multirow{3}*{(8,\,7,\,30)} &0.1\%& (8.65,\,7.48,\,29.98) & 2.56e-02 \\
                           & 1\%&(9.28,\,7.62,\,30.29)  & 4.54e-02 \\
                           &5\%&(9.28,\,8.20,\,31.52)  & 1.39e-01 \\\hline
\end{tabular}
\end{table}

Compared with the relative error of the reconstruction from the noise-free measurement, the relative errors listed in Table \ref{tbl:ex_large_lt02} increase significantly as noise increases. Surprisingly, the reconstructed depth of the target is still acceptable for $\hat\delta=5\%$.

\section{Conclusions and future work}\label{sec_conc}
We proposed a direct and non-iterative inversion scheme to reconstruct the location of a point target using the measured peak time $t>0$ by recovering the relationship to the distance $\lambda>0$
\begin{equation}\label{Distan}
\lambda = \frac{|x_d-x_c|^2+|x_s-x_c|^2}{2vD},
\end{equation}
where $x_d$ is the detector point, $x_c$ is the unknown target, $x_s$ is the source point. The relationship between the peak time and the distance is known by certain nonlinear equations by dividing the cases that fluorescence lifetime is small in \cite{Chen2023} and large in \cite{Chen2025}. By asymptotic analysis in Theorems \ref{thm_lzero} and \ref{thm_linfty}, we are able to see the explicit relationship between the peak time and the distance $\lambda>0$. Theoretically also numerically, it is shown that the peak time has the same order to the distance when $\lambda \gg 1$ (See (c) in Figures \ref{fig_tpszero_diffpara}, \ref{fig_tps_diffpara} and \ref{fig_tps_linear_diffpara}). We are able to determine the distance $\lambda>0$ explicitly from the peak time which implies that the sphere for the target is identified, where the target lies on the sphere. By constructing a tetrahedron with edges determined by the radius of the identified sphere, we can find the location of the unknwon target $x_c$ as the vertex point of the tetrahedron by only using three S-D pairs  $\left\{\{x_s^{(n)},\,x_d^{(n)}\}\right\}_{n=1}^3$.

As a future work, we generalize the study on the relationship between the peak time and the distance $\lambda>0$ to the cases
where a target is not a point target, and the measurement surface $\partial\Omega$ is curved. 

\end{document}